\theoremstyle{definition}
\newtheorem{thm}{Theorem} 
\newtheorem{defn}{Definition}[section]	
\newtheorem{prop}{Proposition}[section]
\newtheorem{lem}{Lemma}[section]
\newtheorem{conj}{Conjecture}
\newtheorem{assumptions}{Assumption}
\newtheoremstyle{dotlessS}{}{}{}{}{\color{blue}\bfseries}{}{ }{}
\theoremstyle{dotlessS}
\DeclareMathOperator{\N}{\mathbb{N}}
\DeclareMathOperator{\C}{\mathbb{C}}
\DeclareMathOperator{\bulk}{\pmb{\mathfrak{b}}}
\DeclareMathOperator{\Ainf}{A_{\infty}} 
\DeclareMathOperator{\Z}{\mathbb{Z}}
\DeclareMathOperator{\R}{\mathbb{R}}
\DeclareMathOperator{\s}{\mathfrak{s}}
\DeclareMathOperator{\rindex}{\mathfrak{r}}
\DeclareMathOperator{\g}{\mathfrak{g}}
\DeclareMathOperator{\evaluation}{ev}
\DeclareMathOperator{\E}{\mathcal{E}}
\DeclareMathOperator{\novring}{\Lambda_{0}}
\DeclareMathOperator{\m}{\mathfrak{m}}
\DeclareMathOperator{\q}{{\mathfrak{q}}}
\DeclareMathOperator{\ld}{\mathcal{L}}
\DeclareMathOperator{\Mq}{\mathcal{M}}
\DeclareMathOperator{\kur}{\mathcal{M}}
\DeclareMathOperator{\val}{val}
\newcommand{\pair}[1]{\left\langle #1 \right\rangle }
\newcommand{\set}[2]{\left\{ #1 \, \middle|\, #2\right\}}
\newcommand{\lrp}[1]{\left(#1\right)}
\newcommand{\littletaller}{\mathchoice{\vphantom{\big|}}{}{}{}}
\newcommand\restr[2]{{
  \left.\kern-\nulldelimiterspace 
  #1 
  \littletaller 
  \right|_{#2} 
  }}
\def\equationautorefname~#1\null{(#1)\null}
\begin{document} 

\title{Moment Lagrangian correspondences are unobstructed after bulk deformation}
\author{Yao Xiao}
\date{\today}

\maketitle 
\newcommand{\Addresses}{{
  \bigskip
  \footnotesize

  \textsc{Department of Mathematics, Stony Brook University,
    Stony Brook, NY 11794}\par\nopagebreak
  \textit{E-mail address}: \texttt{yao.xiao@stonybrook.edu}

}}

\begin{abstract}
 We prove that the Lagrangian correspondences induced by the symplectic reduction maps at free zero level sets of the moment maps are unobstructed after bulk deformation, assuming the existence of certain equivariant Kuranishi structures and compatible equivariant CF-perturbations on the moduli spaces of pseudoholomorphic discs. 
\end{abstract}
\tableofcontents
\section{Introduction}
\label{section Introduction}

A moment Lagrangian correspondence serves as a bridge between a Hamiltonian $G$-manifold $Y$ and its symplectic reduction $Y\sslash G$. In particular, an unobstructed moment Lagrangian correspondence induces a functor from the Fukaya category of $Y$ to the Fukaya category of $Y\sslash G$. (See  \cite{fukaya2023unobstructed} and \cite{MWW}.)  
Recently, Lau-Leung-Li \cite{LLL} exploited such relations and
applied Fukaya's $A_{\infty}$ tri-module structure (see \cite{fukaya2023unobstructed}) to prove Teleman's conjecture.  They have also proved the (weak) unobstructedness of these moment Lagrangian correspondences under some topological assumptions. 
By taking the $G$-action into account, we prove that moment Lagrangian correspondences are unobstructed after bulk deformation, whenever we can equip the moduli spaces of pseudoholomorphic discs with $G$-equivariant Kuranishi structures and compatible $G$-equivariant CF-perturbations such that $G$ acts freely on each Kuranishi chart. These notions related to $G$-equivariant Kuranishi structures have been discussed in \cite{FukayaLieGroupoids} and \cite{xiao2023equivariant}. 
The existence of such equivariant Kuranishi structures and CF-perturbations has been shown, for example, in the case of the moduli spaces with boundary on Lagrangian torus fibers in compact symplectic toric manifolds in \cite{III}.  
And we expect such equivariant Kuranishi structures to exist whenever $G$ acts freely on $\mu^{-1}(\xi)$. 
We refer the reader to \cite{SS}, \cite{HLS1}, \cite{HLS2}, \cite{Cazassus}, \cite{FutakiSanda}, \cite{KLZ}, \cite{HKLZ}, and \cite{LLL} for other approaches to equivariant Lagrangian Floer theory. 

We first recall the definition of a moment Lagrangian correspondence. 
Let $(Y,\omega_Y,G,\mu)$ be a Hamiltonian $G$-manifold consisting of the following data. 
\begin{itemize}
    \item $(Y,\omega_Y)$ is a compact symplectic manifold. 
    \item $G$ is a compact connected Lie group acting on $(Y,\omega_Y)$ in a Hamiltonian fashion. Let $\g$ be the Lie algebra of $G$ and $\g^*$ be its dual. 
    \item $\mu: Y\to \g^*$ is a moment map of the $G$-action. 
\end{itemize}
Suppose $G$ acts on $\mu^{-1}(0)$ freely. By the theorem of Marsden-Weinstein and Meyer, there exist a symplectic reduction map
$\pi: \mu^{-1}(0)\to  \mu^{-1}(0)/G =: Y\sslash G$ and an induced symplectic form $ \omega_{red}$ on $Y\sslash  G$. 
We will call the Lagrangian submanifold
\begin{equation}
\label{moment Lagrangian correspondence}
L = \set{ (p, [p])\in  Y^-\times Y\sslash G}{ p\in  \mu^{-1}(0),\quad \pi(p)=[p] }      
\end{equation} 
 of $\lrp{Y^-\times Y\sslash G, -\omega_Y\oplus  \omega_{red}}$ 
 the \textbf{moment Lagrangian correspondence} induced by this symplectic reduction.  
Denote the inclusion map by 
\begin{equation}\label{inclusion map}
    \iota : L \to Y^-\times Y\sslash G.
\end{equation}
 
We refer the interested reader to \cite{abouzaid2022functoriality} and  \cite{fukaya2023unobstructed} to learn more about Lagrangian correspondences. 

Let $G$ act on $Y^-\times Y\sslash G$ such that it acts trivially on the second factor $ Y\sslash G$ and acts on the first factor $Y^-$ by the original Hamiltonian action. 
We denote $Y^-\times Y\sslash G$ by $X$ and $-\omega_Y\oplus  \omega_{red}$ by $\omega$. Let $J$ be a $G$-invariant $\omega$-compatible almost complex structure on $X$. And we equip $L$ with a relatively spin structure. 
Moreover, 
for each $k,l\in \N$ and $\beta\in \pi_2(X,L)$, define the moduli space
$\Mq_{k+1,l}(L,J,\beta)$ by
\begin{equation}
\label{moduli space of discs notation}
\set{ (\Sigma,j_{\Sigma}, \vec{z},\vec{w},u)}{ 
\begin{aligned}
& \Sigma \text{ is a genus $0$ nodal Riemann surface 
with }\\
& \text{connected boundary and complex structure } j_{\Sigma}; \\
& u:(\Sigma,\partial \Sigma)\to (X,L) 
  \text{ is smooth}; \\
& du\circ j_{\Sigma} =J\circ du; 
\quad [u] = \beta \in \pi_2(X,L); \\
& (\Sigma,j_{\Sigma}, \vec{z}, \vec{w},u) \text{ is stable};  \quad   E(u) 
<\infty\\
& \vec{z} =\left(z_0,z_1,\ldots, z_k\right)  \in (\partial \Sigma)^{k+1}, 
\text{where the $z_i$ are } \\
& \text{distinct non-nodal boundary marked points and the} \\ & \text{enumeration is in counterclockwise order along }\partial \Sigma; \\
& \vec{w} =\left(w_1,\ldots, w_l\right)  \in (\mathring{\Sigma})^{l} \text{ are distinct non-nodal }\\
& \text{interior marked points }
\end{aligned}
} \Big/ \sim ,
\end{equation} 
where $(\Sigma,j_{\Sigma}, \vec{z}, \vec{w},u) \sim (\Sigma',j_{\Sigma'}, \vec{z}^{'},\vec{w}^{'},u')$ if and only if there exists a biholomorphism $\varphi: (\Sigma,j_{\Sigma}) \to (\Sigma',j_{\Sigma'})$ such that $u'\circ \varphi = u$, $\varphi (z_i) = z_i'$ for all $0\leq i \leq k$, and $\varphi(w_j) = w_j'$ for all $1\leq j\leq l$. 
We denote the evaluation map at the $i$-th boundary marked point by
\[ \evaluation_{i,(k+1,l,\beta)}: \Mq_{k+1,l}(L,J,\beta)\to L, \quad [\Sigma,j_{\Sigma}, \vec{z},\vec{w},u]\mapsto u(z_i) \qquad \forall 0\leq i\leq k, \]
and we denote the evaluation map at the $j$-th interior marked point by
\[ \evaluation_{(k+1,l,\beta)}^j: \Mq_{k+1,l}(L,J,\beta)\to L, \quad  [\Sigma,j_{\Sigma}, \vec{z},\vec{w},u] \mapsto u(w_j) \qquad \forall 1\leq j\leq l. \]

The main result of the paper is Theorem \ref{main theorem}. 
\begin{thm}[Main theorem]\label{main theorem}
The Lagrangian submanifold 
\[ L\subset \lrp{Y^-\times Y\sslash G , -\omega_Y \oplus \omega_{red}} \] defined by \eqref{moment Lagrangian correspondence} is unobstructed after bulk deformation under Assumption \ref{assumptions}, which is stated below. 
\end{thm}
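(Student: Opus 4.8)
The plan is to construct an explicit bulk class $\mathfrak{b}$ (a cycle of the appropriate degree supported on $L$, or more precisely on the $G$-orbit directions in $L$) that absorbs the obstruction by exploiting the freeness of the $G$-action on the Kuranishi charts. The overall strategy follows the now-standard architecture of Fukaya--Oh--Ohta--Ono for proving unobstructedness via bulk deformation: one must produce a bounding cochain $b$ so that the $\mathfrak{b}$-deformed curved $A_\infty$-algebra $(CF(L),\mathfrak{m}^{\mathfrak{b}})$ satisfies the Maurer--Cartan equation $\sum_k \mathfrak{m}^{\mathfrak{b}}_k(b,\ldots,b) = 0 \bmod PD[L]$. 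The new input here, relative to the non-equivariant theory, is that the moduli spaces $\mathcal{M}_{k+1,l}(L,J,\beta)$ carry $G$-equivariant Kuranishi structures with compatible $G$-equivariant CF-perturbations on which $G$ acts freely on every chart; this is exactly the content imported from \cite{FukayaLieGroupoids}, \cite{xiao2023equivariant}, and (in the toric case) \cite{III}, and it is packaged into Assumption \ref{assumptions}.

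**First I would** set up the equivariant de Rham (or singular) model for $CF(L)$ and record the structure maps $\mathfrak{m}_{k,l}$ built from integration along the fiber over the $G$-equivariant CF-perturbations, checking the curved $A_\infty$-relations and the divisor/forgetful-map compatibilities for the interior marked points — this is where the freeness of the $G$-action is used to guarantee that the quotient Kuranishi structures on $\mathcal{M}_{k+1,l}(L,J,\beta)/G$ are well-defined and that pushforward/pullback along $\pi\colon \mu^{-1}(0)\to Y\sslash G$ interacts correctly with the evaluation maps. **Then I would** use the circle- (or torus/$G$-) action on the moduli spaces of holomorphic discs to show that the potential-type obstruction classes $\mathfrak{m}_{0,0}(\beta)$, when paired against the equivariantly-exact directions, become divisible by the equivariant parameter, so that a bulk insertion supported in those directions can be chosen to cancel them order by order in the Novikov filtration. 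The key algebraic step is a filtered inductive argument (à la FOOO): assuming the Maurer--Cartan obstruction vanishes modulo $T^{\lambda}$, one identifies the leading obstruction as a class in $H^*(L)$ of odd degree, shows it lies in the image of the maps coming from the $G$-equivariant structure, and corrects $b$ (and if necessary enlarges $\mathfrak{b}$) to push the obstruction to higher filtration level; convergence then follows from Gromov compactness controlling the energy levels $E(\beta)$.

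**The main obstacle** I expect is verifying that the $G$-equivariance genuinely trivializes the obstruction, i.e. that the leading Maurer--Cartan obstruction at each stage actually lies in the subspace reachable by bulk deformation — this requires a careful analysis of how the equivariant fundamental chains of $\mathcal{M}_{k+1,l}(L,J,\beta)$ decompose, and in particular a version of the statement that a moduli space with a free $G$-action has ``no net contribution'' transverse to the orbit directions (morally because the equivariant pushforward of such a fundamental class is supported in positive equivariant degree, hence can be matched by inserting the equivariant Euler-class-type bulk term). A secondary technical difficulty is ensuring all of this is compatible with the product structure of $X = Y^-\times Y\sslash G$ and with the specific geometry of $L$ as the graph of $\pi$ — one needs that holomorphic discs in $X$ with boundary on $L$ split in a controlled way between the two factors, so that the reduction map's contribution to the moment Lagrangian correspondence is exactly the piece handled by the equivariant machinery. **Finally I would** assemble these pieces: define $\mathfrak{b}$ explicitly, verify it is a legitimate bulk class, exhibit the bounding cochain $b$ as the limit of the inductive corrections, and conclude that $(L,\mathfrak{b},b)$ is an object of the bulk-deformed Fukaya category, which is the assertion of Theorem \ref{main theorem}.
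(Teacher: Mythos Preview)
Your filtered inductive scaffolding is right, but the mechanism you propose for killing the obstruction at each stage is not the one that works here, and the one you sketch has a genuine gap.

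The paper's argument does \emph{not} use equivariant parameters, divisibility, or equivariant Euler--class insertions. The key lemma is purely about ordinary (not equivariant) de Rham forms: under Assumption~\ref{assumptions}, every operator $\mathfrak{q}_{l,k,\beta}$ maps $G$-\emph{basic} forms to $G$-basic forms. This holds because pullback along the $G$-equivariant evaluation maps preserves basicness trivially, and pushforward $(\evaluation_0)_!$ preserves basicness once the Thom forms in the CF-perturbation data are themselves $G$-basic (this is exactly why clause~\ref{G-invariant CF-perturbations} of the Assumption is there, and why Section~\ref{section Existence of basic Thom forms for locally free actions} invokes the Cartan operator to produce basic Thom forms from equivariant ones). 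The obstruction $o_{i+1,j}$ at each energy level is therefore a basic form on $L$.

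Now comes the geometric point you are missing entirely. Since $G$ acts freely on $L\cong\mu^{-1}(0)$, basic forms on $L$ are exactly pullbacks from $L/G\cong Y\sslash G$. But $Y\sslash G$ is literally a \emph{factor} of the ambient $X=Y^-\times Y\sslash G$, so one can pull the descended form back along the projection $\pi_{Y\sslash G}\colon X\to Y\sslash G$ to obtain a bulk class $\mathfrak{b}_{i+1,j}\in\Omega(X)$ with $\iota^*\mathfrak{b}_{i+1,j}=o_{i+1,j}$ on the nose. Since $\mathfrak{q}_{1,0,\beta=0}=\pm\iota^*$, this single bulk insertion cancels the obstruction exactly, with $b=0$ throughout. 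One gets strict unobstructedness ($\mathfrak{m}_0^{\mathfrak{b},0}(1)=0$), not merely $\equiv 0\bmod PD[L]$ as you wrote.

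Two further corrections: you never need holomorphic discs to split between the two factors of $X$ --- the argument is entirely at the level of the algebraic operations and the de~Rham model, with no analysis of individual discs beyond what is encoded in Assumption~\ref{assumptions}; and your proposed mechanism (``obstruction divisible by the equivariant parameter, corrected by Euler-class bulk'') would at best show the obstruction vanishes in some localized or completed equivariant theory, which is neither what is claimed nor what suffices, since the bulk deformation must be an honest class in $\Omega(X,\Lambda_{0,nov})$, not in $\Omega_G(X)[\![u]\!]$.
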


\begin{assumptions}\label{assumptions}

For each $k,l\in \N$ and $\beta\in \pi_2(X,L)$, we assume that the following holds. 
    \begin{enumerate}[i)]
        \item  $\Mq_{k+1,l}(L,J,\beta)$  
        has a $G$-equivariant Kuranishi structure, and $G$ acts freely on each Kuranishi chart. 
        \item The evaluation map at every (interior or boundary) marked point is strongly smooth and is $G$-equivariant on each chart. 
        \item \label{G-invariant CF-perturbations}
        There is a compatible $G$-equivariant system of CF-perturbations $\widehat{\mathcal{S}}$ such that the Thom forms in the CF-perturbation data \eqref{equation CF perturbation representative} are $G$-basic.
        \item Moreover, the equivariant Kuranishi structures and equivariant CF-purturbations are compatible with
{ \fontsize{9.5}{11}
\begin{align*}
   & \partial \Mq_{k+1,l}(L,J,\beta) = \\ 
    & \bigcup_{\substack{k_1,k_2, l_1,l_2\geq 0 \\ k_1+k_2=k+1\\l_1+l_2=l}} 
     \bigcup_{\substack{
    \beta_1,\beta_2 \in \pi_2(X,L)\\ \beta_1+\beta_2=\beta}} 
     \bigcup_{j=1}^{k_2} 
      \Mq_{k_1+1,l_1}(L,J,\beta_1) 
   _{\evaluation_{0, (k_1+1,l_1,\beta_1)}}{\times} _{\evaluation_{j, (k_2+1,l_2,\beta_2)}} \Mq_{k_2+1,l_2} (L,J,\beta_2). 
\end{align*}
}
        \item  The evaluation map $\evaluation_{0, (k+1,l,\beta)}: \Mq_{k+1,l}(L,J,\beta)\to L$ at the zero-th boundary marked point is strongly submersive with respect to $\widehat{\mathcal{S}}$. 
       
    \end{enumerate}
\end{assumptions} 

We expect such assumptions to hold when the restriction of the Hamiltonian $G$-action on a symplectic manifold $X$ to a compact $G$-invariant Lagrangian submanifold $L$ is free.  
We will explain more on the notions appearing in the assumption in Section \ref{section Proof of the key lemma}.

We review the bulk deformation construction in Section \ref{section Bulk deformation}. 
Assuming Lemma \ref{key lemma}, we prove the main theorem in Section \ref{section Proof of the main theorem}. 
And we prove Lemma \ref{key lemma} in Section \ref{subsection Proof of key lemma }   after introducing some notions related to $G$-equivariant Kuranishi structures in Section \ref{section G-equivariant Kuranishi data} and reviewing some equivariant de Rham theory in Section \ref{section Existence of basic Thom forms for locally free actions}. 
In the end, we state a related conjecture in Section \ref{section Conjecture on the quantum Kirwan map}. 

\subsection*{Acknowledgement} 
I would like to thank my advisor, Prof.\@ Kenji Fukaya, for suggesting this problem to me and for helpful discussions. 
This paper is part of the author's PhD thesis at Stony Brook University and was partially supported by Simons Foundation International, LTD.
\section{Bulk deformation}
\label{section Bulk deformation} 
We assume the setup and notations in \S \ref{section Introduction}. We take the \textbf{universal Novikov ring}
	\begin{equation}
	\label{universal Novikov ring}
	\Lambda_{0,nov}
	=
	\set{ \sum_{i\in \N} a_iT^{\lambda_i} e^{n_i}
	}
	{ a_i\in \C, \lambda_i\in \R_{\geq 0}, n_i\in \Z,  \lim_{i\to \infty} \lambda_i = \infty
	}	  
	\end{equation} 
to be our coefficient ring. 
And let 
\begin{equation}
	\label{positive Novikov ring}
	\Lambda_{0,nov}^+
	=
	\set{ \sum_{i\in \N} a_iT^{\lambda_i} e^{n_i} \in \Lambda_{0,nov}
	}
	{ \lambda_i >0 \quad \forall i
	}. 
\end{equation} 
For each $\beta\in\pi_2(X,L)$, we denote its symplectic area by $\omega(\beta)$ and its Maslov index by $I_{\mu}(\beta)$. 
$\Lambda_{0,nov}$ comes with a valuation function $\val: \Lambda_{0,nov} \to \R\cup \{\infty\}$ given by the following.
\begin{equation}
\label{valuation universal novikov field}
y=\sum\limits_{i\in \N }  a_i T^{\lambda_i} e^{n_i} 
\mapsto 
\val (y) := \begin{cases}
\min \{ \lambda_i\mid i\in \N, a_i\ne 0\} \quad &\text{if }y \ne 0 \\
\infty \quad &\text{if }y =0
\end{cases}     
\end{equation}

\subsection{The \texorpdfstring{$A_{\infty}$}{A-infinity} algebra associated to a Lagrangian submanifold}
The $A_{\infty}$ algebra 
\[ \lrp{ \Omega(L,\Lambda_{0,nov}), \{\m_k\}_{k\in \N}} \]
associated to $L$ is given by the following data. 
\begin{itemize}
    \item $\Omega(L,\Lambda_{0,nov}) = \Omega(L)\widehat{\otimes}_{\R} \Lambda_{0,nov}$, where $\Omega(L)$ denotes the de Rham complex of $L$, and $\widehat{\otimes}$ denotes completion of the tensor product with respect to the $T$-adic topology. We specify that $\Omega(L,\Lambda_{0,nov})^{\otimes 0} = \Lambda_{0,nov}$, and the elements of
    $\Omega(L,\Lambda_{0,nov})[1]$ are the elements of $\Omega(L,\Lambda_{0,nov})$ with degree shifted down by $1$.  
    
    \item For each $k\in \N$, the $A_{\infty}$ operator  \[ \m_k = \sum_{\beta\in \pi_2(X,L)} \m_{k,\beta}T^{ \omega(\beta) } e^{\frac{I_\mu(\beta)}{2}}: \left(\Omega(L,\Lambda_{0,nov})[1]\right)^{\otimes k} \to \Omega(L,\Lambda_{0,nov})[1] \] 
    is defined by the following. 
    Let $x_1,\ldots, x_k\in \Omega(L,\Lambda_{0,nov})$.
    For $\beta=0$,  
    \begin{equation}
\label{Ainf operator beta = 0}
 \begin{dcases}
\m_{0,\beta=0}(1)= 0 \quad &  \\
\m_{1,\beta=0}(x_1)= dx_1,
\text{ where $d$ is the de Rham differential,} &   \\
\m_{2,\beta=0} (x_1\otimes x_2) = (-1)^{\deg x_1}  x_1 \wedge  x_2 \quad &   \\
\m_{k,\beta=0}=0 \qquad  \forall k\geq 3. \quad &
\end{dcases} 
\end{equation}
For $\beta\ne 0$, 
define
\begin{equation} \label{Ainf operator beta non-zero k=0}
\m_{0,\beta}(1) = (\evaluation_{0, (1,0,\beta)})_!(1),   
\end{equation}
and, for all $k \geq 1$,
\begin{align*}
 & \m_{k,\beta}(x_1\otimes \cdots \otimes x_k) \nonumber \\
 = & (-1)^{1+\sum_{i=1}^k (i\deg x_i +1)} (\evaluation_{0, (k+1,0,\beta)})_!\left(   \evaluation_{1, (k+1,0,\beta)}^*x_1\wedge \cdots \wedge \evaluation_{k, (k+1,0,\beta)}^*x_k\right).      
\end{align*}
\label{Ainf operator beta non-zero} 

\end{itemize}
We note that the signs in the definition of the $\m_k$ follow that in \cite{kurbook} Chapter 22. 

\subsection{Bulk deformation of the \texorpdfstring{$A_{\infty}$}{A-infinity} algebras}
Consider the $\Lambda_{0,nov}$-module homomorphisms of the form
\[ \mathfrak{q}_{l,k}: \left(\Omega(X,\Lambda_{0,nov})[2]\right)^{\otimes l}  \otimes \left(\Omega(L,\Lambda_{0,nov})[1] \right)^{\otimes k}\to \Omega(L,\Lambda_{0,nov})[1], \]
\[ \mathfrak{q}_{l,k} = \sum_{\beta\in \pi_2(X,L)} \mathfrak{q}_{l,k,\beta}T^{ \omega(\beta) } e^{\frac{I_\mu(\beta)}{2}}, \]
where $\q_{l,k,\beta}: \Omega(X,\Lambda_{0,nov})^{\otimes l}\otimes \Omega(L,\Lambda_{0,nov})^{\otimes k} \to \Omega(L,\Lambda_{0,nov}) $ is defined by, for all $ x_1,\ldots, x_k\in \Omega(L,\Lambda_{0,nov})$ and all $ y_1,\ldots, y_l\in \Omega(X,\Lambda_{0,nov})$,  
\begin{align}\label{equation qlkbeta}
\begin{dcases}
& \mathfrak{q}_{0,k,\beta} (x_1\otimes \cdots \otimes x_k) 
= \m_{k,\beta} (x_1\otimes \cdots \otimes x_k)  \\
& \mathfrak{q}_{1,0,\beta=0} (y_1)  = (-1)^{\dagger}\iota^*y_1, \quad \text{ where $\iota: L\hookrightarrow X$ is the inclusion map } \\
& \mathfrak{q}_{1,0,\beta} (y_1)   = 0 \quad \text{ if }    \beta \ne 0 \\ 
& \mathfrak{q}_{l,k,\beta}(y_1\otimes \cdots \otimes y_l\otimes x_1\otimes  \cdots \otimes x_k) = \\
&  (-1)^{\ddag}\frac{1}{l!} (\evaluation_{0, (k+1,l,\beta)})_!\Big( (\evaluation_{ (k+1,l,\beta)}^1)^*y_1\wedge \cdots \wedge  (\evaluation_{ (k+1,l,\beta)}^l)^*y_l\wedge  \\
& \quad \evaluation_{1, (k+1,l,\beta)}^*x_1\wedge \cdots \wedge \evaluation_{k, (k+1,l,\beta)}^*x_k\Big) \\ 
& \;\text{ if } l\ne 0, k\ne 0, \text{ and } (l,k)\ne (1,0). 
\end{dcases}  
\end{align}
Here $\dagger$ is an integer depending on the degree of the differential form $y$, and $\ddag$ is an integer depending on the degrees of the differential forms $y_1,\ldots, y_l, x_1,\ldots, x_k$. 

Let $\bulk\in \Omega^{even}(X,\Lambda_{0,nov})$ and $b\in\Omega^{odd}(L,\Lambda_{0,nov}) $.  
The bulk-deformed $A_{\infty}$ operators are defined by 
\begin{align*}
\label{equation bulk mk}
    & \m_k^{\bulk,b}(x_1\otimes \cdots \otimes x_k)\\
    = & \sum_{\beta\in \pi_2(X,L)}\sum_{l\geq 0} \sum_{r_0,\ldots, r_k\geq 0}\q_{l,r_0+\cdots +r_k+k,\beta}(\bulk^{\otimes l} \otimes b^{r_0}\otimes x_1\otimes b^{r_1}\otimes \cdots \otimes x_k\otimes b^{r_k})T^{\omega(\beta)}e^{\frac{I_{\mu}(\beta)}{2}}. 
\end{align*}
In particular, for any $\bulk\in \Omega^{even}(X,\Lambda_{0,nov})$ and $b\in\Omega^{odd}(L,\Lambda_{0,nov}) $, 
\begin{equation}
\label{equation bulk m0}
\m_{0}^{ \bulk, b}(1) = \sum_{\substack{l,k\in \N \\  \beta \in \pi_2(X,L)} } \q_{l,k,\beta} (\bulk^{\otimes l} \otimes b^{\otimes k})T^{\omega(\beta)}e^{\frac{I_{\mu}(\beta)}{2}}.    
\end{equation} 
We say $L$ is \textbf{unobstructed after bulk deformation} by $\bulk, b$ if 
\[ \m_{0}^{ \bulk, b}(1) =0. \]
Since the bulk-deformed $\Ainf$ operators $\m_k^{\bulk,b}$ 
still satisfy the $\Ainf$ relations, 
there will be no obstruction in defining the Floer cohomology of $L$ by 
\[HF(L,L,\novring) : = H^* \lrp{\Omega(L,\Lambda_{0,nov}), \m_1^{ \bulk, b}}. \]
We refer the readers to \cite{fooobook1}, \cite{FOOObook2}, \cite{II}, and \cite{III} for more details on bulk deformation theory.

\section{Proof of  Theorem \ref{main theorem}}
\label{section Proof of the main theorem}
The main consequence of Assumption \ref{assumptions} is Lemma \ref{key lemma}, the proof of which will be postponed to Section \ref{section Proof of the key lemma}. 

To state Lemma \ref{key lemma}, we recall the following definitions. 
For Definition \ref{defn Fundamental vector fields} and Definition \ref{defn basic forms}, let $G$ be a compact connected Lie group acting smoothly on a smooth manifold $M$. Let $\g$ be the Lie algebra of $G$. 
\begin{defn}[Fundamental vector fields]
\label{defn Fundamental vector fields}
There is a Lie algebra homomorphism $\sigma: \g \to \Gamma(TM)$, which assigns to every $\zeta\in \g$ its \textbf{fundamental vector field}\index{fundamental vector field} $\underline{\zeta}$ on $M$, as follows. 
\begin{equation}
\label{equation fundamentall vector field Lie algebra homomorphism}
\sigma(\zeta)_p : =\underline{\zeta}(p) := \frac{d}{dt}\bigg|_{t=0} (e^{-t\zeta}\cdot p) \qquad p\in M.
\end{equation}
\end{defn} 
 
\begin{defn}[$G$-basic forms on a $G$-manifold] 
\label{defn basic forms}
 A differential form $\alpha\in \Omega(M,R)$ on $M$ with coefficients in a commutative ring $R$ is \textbf{$G$-basic} if, for $X\in \g$, the following holds. 
    \begin{enumerate}[i)]
        \item $\alpha$ is \textbf{$G$-invariant}:  $\mathcal{L}_{\underline{\zeta}} \alpha =0$.   
        \item $\alpha$ is \textbf{$G$-horizontal}:  $\iota_{\underline{\zeta}} \alpha =0$.  
    \end{enumerate}
  
    We can similarly define $G$-basic forms on a Kuranishi space $\mathcal{M}$ if each of its Kuranishi charts admits a $G$-action.  
\end{defn}

The following lemma is well-known in equivariant de Rham theory. See \cite{GHVII} Chapter IV \S 2 Proposition III or \cite{Tu} Theorem 12.5 for instance. 
We will use it for proving Theorem \ref{main theorem}. 
\begin{lem}[Basic forms on principal bundles are pullbacks]
\label{Lemma Basic forms of principal bundles are pullbacks}   Let $G$ be a compact connected Lie group. Let $\pi: P\to B$ be a smooth principal $G$-bundle.  
Then $\pi^*: \Omega(B) \to \Omega_{bas}(P)$ is an isomorphism. 
\end{lem}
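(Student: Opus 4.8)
The plan is to prove that $\pi^*\colon \Omega(B)\to \Omega_{bas}(P)$ is an isomorphism by exhibiting an explicit inverse, constructed locally over a trivializing cover and then shown to patch together. First I would verify that the map lands in $\Omega_{bas}(P)$: for any $\alpha\in\Omega(B)$, the form $\pi^*\alpha$ is $G$-invariant because $\pi\circ R_g = \pi$ for the right $G$-action $R_g$ on $P$, and it is $G$-horizontal because the fundamental vector fields $\underline{\zeta}$ are vertical (tangent to the fibers of $\pi$), so $\iota_{\underline{\zeta}}\pi^*\alpha = \pi^*(\iota_{d\pi(\underline{\zeta})}\alpha) = 0$. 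Injectivity is then immediate from the fact that $\pi$ is a surjective submersion: if $\pi^*\alpha = 0$ then $\alpha$ vanishes on all of $B$ since every tangent vector of $B$ lifts locally.

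The substance is surjectivity. I would first treat the trivial bundle $P = B\times G$ with $\pi$ the projection. Let $\beta\in\Omega_{bas}(B\times G)$. For a fixed basepoint $e\in G$, define $\alpha := s_e^*\beta\in\Omega(B)$, where $s_e\colon B\to B\times G$, $b\mapsto (b,e)$ is the identity section. The claim is $\pi^*\alpha = \beta$. To see this, note that $G$-invariance of $\beta$ means $R_g^*\beta = \beta$ for all $g$, so $\beta$ is determined by its restriction along any one section, and $G$-horizontality kills any component of $\beta$ involving the $G$-directions; a direct computation in coordinates $(b,g)$, writing $\beta$ in terms of $db$'s and (right-invariant) $dg$'s, shows the horizontal condition forces all $dg$-components to vanish and invariance forces the remaining coefficients to be independent of $g$, whence $\beta = \pi^*\alpha$. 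Then for a general principal bundle, choose a trivializing open cover $\{U_i\}$ of $B$ with $G$-equivariant trivializations $\varphi_i\colon \pi^{-1}(U_i)\xrightarrow{\sim} U_i\times G$. Applying the trivial-bundle case over each $U_i$ produces $\alpha_i\in\Omega(U_i)$ with $\pi^*\alpha_i = \beta|_{\pi^{-1}(U_i)}$; on overlaps $U_i\cap U_j$ we get $\pi^*(\alpha_i - \alpha_j) = 0$, so by the already-established injectivity over $U_i\cap U_j$ we have $\alpha_i = \alpha_j$ there, and the $\alpha_i$ glue to a global $\alpha\in\Omega(B)$ with $\pi^*\alpha = \beta$.

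The main obstacle is the local computation over the trivial bundle: one must carefully unwind the definitions of $G$-invariance and $G$-horizontality (Definition \ref{defn basic forms}) in terms of the fundamental vector fields (Definition \ref{defn Fundamental vector fields}) and check that together they pin down $\beta$ to be a pullback, rather than merely being necessary conditions. A clean way to organize this is to use the (right-invariant) Maurer--Cartan form to split $T(B\times G) \cong \pi^*TB \oplus (\text{vertical})$, decompose $\beta$ according to this splitting, and observe that $G$-horizontality annihilates every summand with a positive vertical degree while $G$-invariance makes the purely horizontal part constant along fibers; averaging over $G$ using a Haar measure is an alternative device if one prefers to avoid coordinates. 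Since this lemma is classical and we cite \cite{GHVII} and \cite{Tu}, I would keep this step brief and refer the reader there for the full verification, presenting only the inverse map and the gluing argument in detail.
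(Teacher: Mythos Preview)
Your proposal is correct and follows essentially the same route as the paper's proof: both establish injectivity via $\pi$ being a surjective submersion, verify that $\pi^*$ lands in $\Omega_{bas}(P)$, and prove surjectivity by reducing to local trivializations $U_\alpha\times G$, using horizontality and invariance to show the form there is a pullback from $U_\alpha$, and then gluing over overlaps. The only difference is presentational: you spell out the inverse on the trivial bundle via the section $s_e$ and a coordinate decomposition, whereas the paper simply asserts the existence of a unique $\beta_\alpha$ with $\beta_\alpha\otimes 1 = \psi_\alpha^*\eta$.
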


\begin{proof}
Since $\pi$ is a surjective submersion, $ \pi^*$ is injective. 
We now show $\pi^*\Omega(B) = \Omega_{bas}(P)$. 

Suppose $\eta = \pi^*\beta$ for some $\beta\in \Omega(B)$. 
Then, for any $g\in G$ and its induced diffeomorphism $\varphi_g:P\to P$, we have 
$\varphi_g^* \eta = \varphi_g^*\pi^*\beta = (\pi \circ \varphi_g)^*\beta = \eta$. 
Since $G$ is connected, this is equivalent to $\mathcal{L}_{\underline{\zeta}}\eta =0$ for all $\zeta\in \g$, showing that $\eta$ is $G$-invariant. 
Moreover, 
$d\pi \circ \underline{\zeta} =0$
for all $\zeta\in \g$. 
Hence, 
$\iota_{\underline{\zeta}} (\pi^*\beta) = 0$ for all $\zeta\in \g$. 
This shows that $\pi^*\Omega(B) \subset \Omega_{bas}(P)$. 

Suppose $\eta \in \Omega(P)$ is $G$-horizontal and $G$-invariant. 
Let $\left \{ U_{\alpha}\times G \xrightarrow{\psi_{\alpha}} \pi^{-1}(U_{\alpha})\right\} $ be a trivialization of $\pi$. Since $\psi_{\alpha}^*$ commutes with $\mathcal{L}_{\underline{\zeta}}$ and $\iota_{\underline{\zeta}}$ for all $\zeta\in \g$, the form
$\psi_{\alpha}^*(\eta\eval_{\pi^{-1}(U_i)})$ is also horizontal and invariant. 
Thus, there exists a unique $\beta_{\alpha}\in \Omega(U_{\alpha})$ such that $ \beta_{\alpha}\otimes 1  = \psi_{\alpha}^*(\eta\eval_{\pi^{-1}(U_{\alpha})})$. 
Then $(\beta_{\alpha})_x = (\beta_{\alpha'})_x$ if $x\in U_{\alpha}\cap U_{\alpha'}$, and we can define $\beta\in \Omega(B)$ by $\beta_x = (\beta_{\alpha})_x$ for $x\in U_{\alpha}$. Hence, we have $\pi^*\beta = \eta$.  
\end{proof}

The key lemma in proving Theorem \ref{main theorem} is the following. We postpone its proof to Section \ref{section Proof of the key lemma}. 
\begin{lem}[Key lemma]\label{key lemma}
Suppose Assumption \ref{assumptions} holds. 
For any $l,k \in \N$, 
the map 
    \[ \mathfrak{q}_{l,k,\beta}: \Omega(X,\Lambda_{0,nov})^{\otimes l}  \otimes \Omega(L,\Lambda_{0,nov})^{\otimes k} \to \Omega(L,\Lambda_{0,nov}) \] 
    defined in \eqref{equation qlkbeta}  maps an element of  $\Omega_{bas}(X,\Lambda_{0,nov})^{\otimes l}  \otimes \Omega_{bas}(L,\Lambda_{0,nov})^{\otimes k} $ to an element of $\Omega_{bas}(L,\Lambda_{0,nov})$. 
\end{lem}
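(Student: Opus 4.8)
The plan is to reduce the statement to the defining properties of $G$-basic forms together with the $G$-equivariance built into Assumption \ref{assumptions}. The operator $\q_{l,k,\beta}$ is, up to sign and the factor $\frac{1}{l!}$, a composition of three operations: pulling back differential forms along the evaluation maps at the non-zero marked points, wedging the results together on the moduli space $\Mq_{k+1,l}(L,J,\beta)$, and pushing forward along $\evaluation_{0,(k+1,l,\beta)}$ via integration along the fiber (the correspondence/Gysin map $(\evaluation_{0})_!$, realized through the CF-perturbation $\widehat{\mathcal S}$). So I would check that each of these three operations preserves $G$-basic-ness, and then compose. The cases $\beta = 0$ and $(l,k)=(1,0)$ are handled separately: for $\beta=0$ the $\q_{0,k,0}$ are the classical $\m_k$ (de Rham differential, wedge product), which obviously send basic forms to basic forms since $d$ and $\wedge$ commute with $\mathcal L_{\underline\zeta}$ and $\iota_{\underline\zeta}$; and $\q_{1,0,0}(y_1) = \pm \iota^* y_1$ is basic because $\iota: L \hookrightarrow X$ is $G$-equivariant and pullback along an equivariant map preserves both $G$-invariance and $G$-horizontality.

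For the main case ($\beta \neq 0$, $l,k$ not both trivial), the first step: by Assumption \ref{assumptions}(ii) every evaluation map $\evaluation^j_{(k+1,l,\beta)}$ and $\evaluation_{i,(k+1,l,\beta)}$ is $G$-equivariant on each Kuranishi chart, so if $y_j \in \Omega_{bas}(X)$ and $x_i \in \Omega_{bas}(L)$ then each pullback $(\evaluation^j)^* y_j$ and $\evaluation_i^* x_i$ is a $G$-basic form on $\Mq_{k+1,l}(L,J,\beta)$ in the chartwise sense of Definition \ref{defn basic forms}; here I use that $f^*$ intertwines $\mathcal L_{\underline\zeta}$ and $\iota_{\underline\zeta}$ with the corresponding operations for the fundamental vector fields on source and target whenever $f$ is equivariant (this is exactly the computation $d\pi \circ \underline\zeta = 0$ type argument in the proof of Lemma \ref{Lemma Basic forms of principal bundles are pullbacks}). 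Second step: the wedge product of $G$-invariant forms is $G$-invariant, and the wedge product of forms at least one of which is $G$-horizontal is $G$-horizontal (using the derivation property of $\iota_{\underline\zeta}$ and invariance of the other factors); hence the full wedge $\bigwedge_j (\evaluation^j)^* y_j \wedge \bigwedge_i \evaluation_i^* x_i$ is $G$-basic on the moduli space.

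The third and crucial step is that the fiberwise pushforward $(\evaluation_{0,(k+1,l,\beta)})_!$ takes $G$-basic forms on $\Mq_{k+1,l}(L,J,\beta)$ to $G$-basic forms on $L$. This is where Assumption \ref{assumptions}(i) (free $G$-action on each Kuranishi chart), (iii) ($G$-basic Thom forms in the CF-perturbation data), and (v) (strong submersivity of $\evaluation_0$ with respect to $\widehat{\mathcal S}$) enter. Because $G$ acts freely on each chart, locally $\evaluation_0$ factors $G$-equivariantly through the quotient; on a chart, integration along the fiber of $\evaluation_0$ against a $G$-basic integrand, with a $G$-basic Thom form inserted, descends — via Lemma \ref{Lemma Basic forms of principal bundles are pullbacks} applied to the principal $G$-bundle structure of the free action — to honest fiber integration downstairs, and the output is the pullback of a form on the quotient, hence $G$-basic on $L$ (note $G$ acts on $L = $ graph of $\pi$ through the second-factor-trivial action, and $\evaluation_0$ is $G$-equivariant for this action). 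I expect this step to be the main obstacle: one must verify that the CF-perturbation construction of $(\evaluation_0)_!$ — which is assembled chart by chart using a partition of unity subordinate to the Kuranishi structure, differential forms supported on charts, and the Thom forms of \eqref{equation CF perturbation representative} — can be carried out with all auxiliary data chosen $G$-basic (the partition of unity $G$-invariant, the Thom forms $G$-basic as in Assumption \ref{assumptions}(iii)), so that $\mathcal L_{\underline\zeta}$ and $\iota_{\underline\zeta}$ pass through the integration and the gluing; the compatibility with the boundary decomposition in Assumption \ref{assumptions}(iv) guarantees the construction is globally consistent. Granting this, composing the three steps yields $\q_{l,k,\beta}\bigl(\Omega_{bas}(X)^{\otimes l}\otimes \Omega_{bas}(L)^{\otimes k}\bigr) \subset \Omega_{bas}(L)$, and tensoring with $\Lambda_{0,nov}$ (which is flat and does not interact with the $G$-action) gives the statement over the Novikov ring.
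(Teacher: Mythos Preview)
Your three-step decomposition (pullback, wedge, pushforward each preserve $G$-basicness) is exactly the skeleton of the paper's proof, and your handling of the special cases and of the first two steps matches the paper.

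The difference is in the crucial third step. You propose a descent argument: because $G$ acts freely on each chart and on $L$, everything should factor through the quotients $U_{\mathfrak r}/G \to L/G$, and a basic form pushed forward with basic Thom data should be the pullback of the quotient pushforward. That is correct in principle, but --- as you yourself flag --- it requires verifying that the CF-perturbation construction of $(\evaluation_0)_!$ (the perturbed zero set $(\s^{\epsilon}_{\mathfrak r})^{-1}(0)$, the bundle $\nu_{\mathfrak r}$, the Thom form, the partition of unity) descends compatibly. The paper bypasses all of this with a direct computation: using the defining identity
\[
\int_L (f_{U_{\mathfrak r}})_!(h;\mathcal S^{\epsilon}_{\mathfrak r})\wedge \rho
= \int_{(\s^{\epsilon}_{\mathfrak r})^{-1}(0)} \nu_{\epsilon}^* h \wedge (f_{U_{\mathfrak r}}\circ\nu_{\epsilon})^*\rho \wedge \tau_{\mathfrak r},
\]
one checks that $(f_{U_{\mathfrak r}})_!$ commutes with $\iota_{\underline\zeta}$ (by moving $\iota_{\underline\zeta}$ across the wedge and using $\iota_{\underline\zeta}\tau_{\mathfrak r}=0$ from the basicness of the Thom form) and with $d$ (similarly), hence with $\mathcal L_{\underline\zeta}$ by Cartan's formula. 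This uses only equivariance of the maps and basicness of $\tau_{\mathfrak r}$, and never needs to form the quotient or invoke Lemma~\ref{Lemma Basic forms of principal bundles are pullbacks}. Your route would work but is longer; the paper's integral manipulation is the cleaner way to close the argument.
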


The proof of Theorem \ref{main theorem} will be based on an induction on the monoid
 \begin{equation}
 \label{submonoid}
 \Gamma = \set{ \lrp{\omega(\beta), \frac{I_\mu(\beta)}{2}  } \in \R_{\geq 0}\times \Z }{   \beta \in \pi_2(X,L), \Mq(L,J,\beta)\ne \emptyset  }.       
 \end{equation}
 Consider the lexicographic order on $\Gamma$ given by the following. 
 Let $(\lambda , n),(\lambda ', n')\in \Gamma$.  
 \begin{enumerate}[1)]
     \item $(\lambda , n)=(\lambda ', n') $ if and only if $\lambda = \lambda', n=n'$. 
     \item $(\lambda , n)<(\lambda ', n') $ if one of the following holds. \begin{enumerate}
         \item $\lambda <\lambda'$  
         \item 
         $\lambda =\lambda'$, $n<n'$. 
    \end{enumerate} 
 \end{enumerate} 

We may renumber the elements of $\Gamma$ as follows. 
\[ \Gamma  = \big \{ (\lambda_i, n_{i,j}) \in \R_{\geq 0}\times \Z \mid i=0,1,\ldots,\quad  0\leq j\leq l_i \big \} \]
so that $\lambda_i< \lambda_{i+1}$ for all $i\geq 0$ and 
$n_{i,j}< n_{i,j+1}$ for all $1\leq j \leq l_{i-1}$. 

\begin{proof}[Proof of Theorem \ref{main theorem}]
We want to construct 
\begin{equation}\label{bulk form}
 \bulk^{(i)} = \sum_{i'=0}^i \sum_{j'=1}^{l_{i'}} \bulk_{i',j'}T^{\lambda_{i'}} e^{n_{i',j'}}  , \qquad  b^{(i)} = \sum_{i'=0}^i \sum_{j'=1}^{l_{i'}} b_{i',j'}T^{\lambda_{i'}} e^{n_{i',j'}} 
\end{equation}
such that the $\bulk_{i',j'}, b_{i',j'}$ are $G$-basic forms on $X,L$, respectively, and the terms of $\m_0^{\bulk^{(i)},b^{(i)}}(1)$ with valuation less than or equal to $\lambda_i$ vanish in the sense that 
\begin{equation}\label{inductive hypothesis}
\m_{0}^{ \bulk^{( i )}  ,  b^{(i)}}(1) \equiv 0 \mod T^{\lambda_{i}} \Lambda_{0,nov}^+  ,    
\end{equation} 
by induction on $i$. 
\\
\noindent 
Let 
\[ \bulk^{( 0 ) } =0 , \quad b^{(0)}=0. \]
Then 
\[ \m_0^{\bulk^{( 0 ) }, b^{( 0 ) }} (1)\equiv \q_{l=0,k=0,\beta=0}(1) \equiv 0 \mod \Lambda_{0,nov}^+ . 
\] 
Assume that we have constructed   
\[  \bulk^{(i)} = \sum_{i'=0}^i \sum_{j'=1}^{l_{i'}} \bulk_{i',j'}T^{\lambda_{i'}} e^{n_{i',j'}} , \quad 
  b^{(i)} = \sum_{i'=0}^i \sum_{j'=1}^{l_{i'}} b_{i',j'}T^{\lambda_{i'}} e^{n_{i',j'}} \]  
such that the $\bulk_{i',j'}, b_{i',j'}$ are $G$-basic forms and  
\begin{equation}
\label{induction hypothesis}
\m_{0}^{ \bulk^{( i )}  ,  b^{(i)} }(1) \equiv 0 \mod T^{\lambda_{i}} \Lambda_{0,nov}^+.     
\end{equation}

We want to construct 
\[  \bulk^{(i+1)} = \sum_{i'=0}^{i+1} \sum_{j'=1}^{l_{i'}} \bulk_{i',j'}T^{\lambda_{i'}} e^{n_{i',j'}}, \qquad  
b^{(i+1)} = \sum_{i'=0}^{i+1} \sum_{j'=1}^{l_{i'}} b_{i',j'}T^{\lambda_{i'}} e^{n_{i',j'}},   \]  
such that \eqref{induction hypothesis} holds with $i$ replaced by $i+1$. 
We note that if $\q_{l,k,\beta}T^{\omega(\beta)}e^{\frac{I_{\mu}(\beta)}{2}}$ 
\begin{itemize}
    \item either takes a tensor product of more than one term with positive valuation, at least one of which takes the form $\bulk_{i+1, \bullet}T^{\lambda_{i+1}}e^{n_{i+1,\bullet}}$ or $b_{i+1, \bullet}T^{\lambda_{i+1}}e^{n_{i+1,\bullet}}$ 
    \item or has $\beta\ne 0$ and takes exactly one element of the form $\bulk_{i+1, \bullet}$ and $b_{i+1, \bullet}$, 
\end{itemize}
then since the operators $\q_{l,k,\beta}$ are filtration-preserving, the resulting term will have valuation strictly higher than $\lambda_{i+1}$ and thus be $0 \mod T^{\lambda_{i+1}} \Lambda_{0,nov}^+$. 
Therefore, the contributions of $\bulk^{(i+1)}-\bulk^{(i)}$ and $b^{(i+1)}-b^{(i)}$ to 
$\m_0^{\bulk^{(i+1)}, b ^{(i+1)}} (1) \mod T^{\lambda_{i+1}}\Lambda_{0,nov}^+$
are of the form
\[ \q_{1,0,\beta=0}(\bulk_{i+1, \bullet})T^{ \lambda_{i+1}}e^{n_{i+1,\bullet}}, \quad  
\q_{0,1,\beta=0}(\bulk_{i+1, \bullet})T^{ \lambda_{i+1}}e^{n_{i+1,\bullet}} . 
\]
The other contributions must come from $\bulk^{(i)}$ and $b^{(i)}$. 
Since we know their contributions to the terms of $\m_0^{\bulk^{(i+1)}, b ^{(i+1)}} (1)$ with valuation less than or equal to $\lambda_i$ vanish, their contributions to the terms in $\m_0^{\bulk^{(i+1)}, b ^{(i+1)}} (1) \mod T^{\lambda_{i+1}}\Lambda_{0,nov}^+$ have to be exactly of valuation $T^{\lambda_{i+1}}$.

Let $o_{i+1,j}$ be the coefficient of $T^{\lambda_{i+1}}e^{n_{i+1,j}}$ in $\m_{0}^{\bulk^{(i)},b^{(i)}}(1)$. 
By the above argument, 
\begin{align*} 
& \m_0^{\bulk^{(i+1)}, b ^{(i+1)}} (1)\\ 
\equiv  
&  
\sum_{j=1}^{l_{i+1}} \Big(o_{i+1,j}
+  \q_{1,0,\beta=0}(\bulk_{i+1,j})  
+  \q_{0,1,\beta=0}(b_{i+1,j})  \Big) 
T^{\lambda_{i+1}}e^{n_{i+1,j}} 
 \\
 \equiv  & 
\sum_{j=1}^{l_{i+1}} 
\left( 
o_{i+1,j} 
 \pm  \iota^*(\bulk_{i+1,j}) 
 + d(b_{i+1,j}) \right)T^{\lambda_{i+1}}e^{n_{i+1,j}}
 \mod T^{\lambda_{i+1}}\Lambda_{0,nov}^+ . 
\end{align*}

Therefore, we only need to find $ \bulk_{i+1,j}, b_{i+1,j}$ such that 
\begin{equation}\label{obstruction equation}
 o_{i+1,j} \pm \iota^*(\bulk_{i+1,j}) + d(b_{i+1,j})=0.    
\end{equation} 
By Lemma \ref{key lemma}, 
we have 
\[  o_{i+1,j} \in \Omega_{bas}^{\bullet}(L, \Lambda_{0,nov}).  \]
We consider the maps 
 
\[\scalebox{0.87}{$\Omega_{bas}^{\bullet}(L, \Lambda_{0,nov})  \xrightarrow[\ref{map 1}]{ (\pi_{L/ G}^*)^{-1}}\Omega^{\bullet}(L/G, \Lambda_{0,nov}) \xrightarrow[\ref{map 2}]{ \Delta_{Y\sslash G}^*} \Omega^{\bullet}( Y\sslash G, \Lambda_{0,nov})\xrightarrow[\ref{map 3}]{\pi_{Y\sslash G}^*} \Omega^{\bullet}(Y^-\times Y\sslash G, \Lambda_{0,nov})  $}
\]
given by the following. 
\begin{enumerate}[1)]
    \item \label{map 1}
    Since $G$ acts on $L$ freely, $L\to L/G$ is a principal $G$-bundle. 
    Thus, by Lemma \ref{Lemma Basic forms of principal bundles are pullbacks}, there exists an isomorphism 
\[ \pi_{L/G}^*: \Omega^{\bullet}(L/G, \Lambda_{0,nov})\xrightarrow{\cong} \Omega_{bas}^{\bullet}(L, \Lambda_{0,nov}). \]
\item \label{map 2} 
$\Delta_{Y\sslash G}^*$ is the pullback map induced by the diagonal map
\[ \Delta_{ Y\sslash G}: Y\sslash G \to  Y\sslash G \times Y\sslash G = L/G, \qquad [p]\mapsto ([p],[p]). \] 
\item \label{map 3}
$\pi_{Y\sslash G}^*$ is the pullback map induced by the projection map $\pi_{Y\sslash G}: Y^-\times Y\sslash G \to  Y\sslash G $. 
\end{enumerate} 
Let 
\[ \bulk_{i+1,j} = \mp \pi_{Y\sslash G}^*\circ\Delta_{Y\sslash G}^*\circ (\pi_{L/ G}^*)^{-1} (o_{i+1,j}), \qquad b_{i+1,j}=0, \] 
where we use $-$ if the sign in \eqref{obstruction equation} is $+$, and we use $+$ if the sign in \eqref{obstruction equation} is $-$. 
It satisfies 
\begin{align*} 
    o_{i+1,j} \pm  \iota^*\bulk_{i+1,j}+ d(b_{i+1,j})  
    & =  o_{i+1,j} \pm \left(\mp \iota^*\circ \pi_{Y\sslash G}^*\circ\Delta_{Y\sslash G}^*\circ (\pi_{L/ G}^*)^{-1} (o_{i+1,j})\right) + 0 \\
    &  = o_{i+1,j} \pm \left( \mp  (\Delta_{Y\sslash G} \circ \pi_{Y\sslash G} \circ \iota )^*\circ (\pi_{L/ G}^*)^{-1} (o_{i+1,j})\right) \\
    & = o_{i+1,j} \pm \left( \mp \pi_{L/ G}^* \circ (\pi_{L/ G}^*)^{-1} (o_{i+1,j})\right) =0. 
\end{align*}
Let $l\in \N$ and $\alpha \in \Omega^l(Y\sslash G)$. 
Let us denote $f:=\pi_{Y\sslash G}$. 
Then for all $\zeta\in \g$, $p\in Y^-\times Y\sslash G$, $v_2\ldots, v_l\in T_{f(p)} ( Y^-\times Y\sslash G)$,  we have 
\[ (\iota_{\underline{\zeta}} (\pi_{Y\sslash G}^*\alpha))_p(v_2,\ldots, v_l) =\alpha_{f(p)}(df\circ \underline{\zeta}(x) , df_{p}(v_2),\ldots, df_p(v_l))=0, 
\] 
since $ \underline{\zeta}(p_1,p_2) =  (\underline{\zeta}(p_1),0)$ for all $(p_1,p_2)\in Y\times Y\sslash G$. 
Then 
\[ 
\ld_{\underline{\zeta}} (\pi_{Y\sslash G}^*\alpha) = d(\iota_{\underline{\zeta}} \pi_{Y\sslash G}^*\alpha) + \iota_{\underline{\zeta}} (d\pi_{Y\sslash G}^*\alpha) =\iota_{\underline{\zeta}} (\pi_{Y\sslash G}^*d\alpha)=0.    \]  
This shows that $\bulk_{i+1,j}$ is a basic form and completes the induction. 
By construction, if we let
\[ \bulk = \lim_{i\to \infty} \bulk ^{ (i)}, \quad b =0, \]
then \[ \m_{0}^{\bulk, b} (1)= 0. \] 
 \end{proof}

\section{Proof of the key lemma}
\label{section Proof of the key lemma}

\subsection{\texorpdfstring{$G$}{G}-equivariant Kuranishi data}
\label{section G-equivariant Kuranishi data}
It suffices to prove Lemma \ref{key lemma} on $G$-invariant open subsets of $G$-equivariant Kuranishi charts. 
We refer the reader to \cite{kurbook} Chapter 7 -- Chapter 12 for a more detailed discussion of ordinary\footnote{By ``ordinary" we mean the cases where we do not take the $G$-actions into consideration. } integration along the fibers, and to \cite{FukayaLieGroupoids}, and \cite{xiao2023equivariant} Section 5 for an introduction to $G$-equivariant Kuranishi structures. 

\begin{defn}[$G$-equivariant Kuranishi chart]
\label{Kuranishi chart}
  Let $\mathcal{M}$ be a separable metrizable topological space with a topological action by a compact connected Lie group $G$. 
A \textbf{$G$-equivariant Kuranishi chart} on $\mathcal{M}$ is a quadruple
$\mathcal{U} = (U, \E,  \psi , s )$ satisfying the following. 
    \begin{enumerate}[(a)]
        \item $U$ and $\E$ are oriented smooth effective orbifolds, possibly with corners, each equipped with a smooth $G$-action. 
        \item $\E \xrightarrow{\pi } U $ is a smooth $G$-equivariant orbibundle. 
        \item $s : U  \to \E$ is a smooth $G$-equivariant section of $\pi$. 
        \item $\psi: s^{-1}(0)\to \mathcal{M}$ is a $G$-equivariant continuous map, which is a homeomorphism onto an open subset in $\kur$. 
    \end{enumerate}
    \end{defn}
\begin{defn}[Cartan model for equivariant differential forms]
Let $G$ be a compact connected Lie group acting smoothly on a smooth manifold $M$. Let $\g$ be the Lie algebra of $G$ and $\g^*$ be its dual. 
Let $S(\g^*)$ be the symmetric algebra on $\g^*$. 
Define the \textbf{Cartan model} of $G$-equivariant differential forms on $M$ by the $G$-invariant elements of $\Omega(M)\otimes S(\g^*)$, namely
\[ \Omega_G(M):=(\Omega(M)\otimes S(\g^*))^G.  \]
We identify $\Omega_G(M)$ with the set of equivariant polynomials $\g\to \Omega(M)$. 
The \textbf{Cartan differential} $d_G: \Omega_G^{\bullet}(M) \to \Omega_G^{\bullet+1}(M)$ 
is given by 
\[ (d_G\alpha)(\zeta) = d(\alpha(\zeta)) -\iota_{\underline{\zeta}} \alpha(\zeta), \qquad \forall \alpha\in \Omega_G(M) \quad \forall \zeta\in \g. \]
\end{defn}
We follow \cite{Cieliebak} for the definition of equivariant Thom forms. 
\begin{defn}[Equivariant Thom forms]
\label{definition Equivariant Thom forms}
Let $f: E\to B$ be a $G$-equivariant oriented real vector bundle of rank $d$. 
A $G$-equivariant differential form $\tau  \in \Omega_G^d(E)$ is an \textbf{equivariant Thom form} if the following holds. 
\begin{enumerate}[1)]
    \item $\tau$ is equivariantly closed: $d_G \tau =0$.
    \item $\int_{E_x} \tau =1$ for all $x\in B$, where $E_x = f^{-1}(x)$.  
    \item There exists a $G$-invariant open neighborhood $\mathcal{O}$ of the zero section such that 
    \begin{enumerate}
        \item $\operatorname{supp} \tau \subset \mathcal{O}$, 
        \item $\mathcal{O}\cap E_x$ is convex for all $x\in B$, and
        \item $\mathcal{O}\cap E|_K$ is precompact for all compact set $K\subset B$. 
        \end{enumerate}
\end{enumerate}

\end{defn}
\begin{defn}
[CF-perturbation representative on a $G$-invariant open subset of a Kuranishi chart\footnote{``CF"  stands for ``continuous family".}]
\label{defn CF-perturbation on a suborbifold}
Let $\mathcal{U}=(U, \E , \psi, s)$ be a $G$-equivariant Kuranishi chart and $U_{\mathfrak{r}}\subset U$ be a $G$-invariant open subset of $U$. 
A \textbf{$G$-equivariant CF-perturbation representative of $\mathcal{U}$ on $U_{\mathfrak{r}}$} is a continuous family of data 
\begin{equation}
\label{equation CF perturbation representative}
\mathcal{S}_{\mathfrak{r}} = \{ \mathcal{S}_{\mathfrak{r}}^{\epsilon}= (W_{\mathfrak{r}}\xrightarrow{\nu_{\mathfrak{r}}}U_{\mathfrak{r}},\tau_{\mathfrak{r}}, \s^{\epsilon}_{\mathfrak{r}})\mid \epsilon \in (0,1]\}    
\end{equation} such that the following holds.  
\begin{enumerate}[i)]
    \item $W_{\mathfrak{r}}$ is an effective orbifold with a smooth $G$-action.
    \item $\nu_{\mathfrak{r}}: W_{\mathfrak{r}}\to U_{\mathfrak{r}}$ is a smooth oriented $G$-equivariant orbibundle. 
     \item $\tau_{\mathfrak{r}} \in \Omega_G(W_{\mathfrak{r}})$ is a $G$-equivariant Thom form of $ \nu_{\mathfrak{r}} : W_{\mathfrak{r}}\to U_{\mathfrak{r}}$. 
    \item Let $\nu_{\mathfrak{r}}^*(\restr{\E}{U_{\mathfrak{r}}})\to W_{\mathfrak{r}}$ be the pullback bundle of $\restr{\E}{U_{\mathfrak{r}}} \to U_{\mathfrak{r}}$ via $\nu_{\mathfrak{r}}$ and let 
    $pr_2: \nu_{\mathfrak{r}}^*(\restr{\E}{U_{\mathfrak{r}}}) \to \restr{\E}{U_{\mathfrak{r}}}$ be the projection map.  
    $\forall 0< \epsilon\leq 1$, let $\tilde{\s}_{\mathfrak{r}}^{\epsilon}: W_{\mathfrak{r}} \to \nu_{\mathfrak{r}}^*(\restr{\E}{U_{\mathfrak{r}}})$ be a section of the bundle $\nu_{\mathfrak{r}}^*(\restr{\E}{U_{\mathfrak{r}}}) \to W_{\mathfrak{r}}$ satisfying the following. 
    \begin{enumerate}[a)]
        \item $\s_{\mathfrak{r}}^{\epsilon} = pr_2\circ \tilde{\s}_{\mathfrak{r}}^{\epsilon} : W_{\mathfrak{r}}\to \restr{\E}{U_{\mathfrak{r}}}$ is a $G$-equivariant bundle map 
        and the family 
        $\{\s_{\mathfrak{r}}^{\epsilon}\}_{\epsilon \in (0,1]}$ depends smoothly on $\epsilon$. 
        \item  Moreover, $\lim\limits_{\epsilon\to 0}\s_{\mathfrak{r}}^{\epsilon} = s\circ \nu_{\mathfrak{r}}$
    in the compact $C^1$-topology. 
    \[
    \xymatrix{
    \nu_{\mathfrak{r}}^* (\restr{\E}{U_{\mathfrak{r}}})
    \ar[d]_{pr_1}
    \ar[r]^{pr_2}
    & 
     \restr{\E}{U_{\mathfrak{r}}}
     \ar[d]^{\restr{\pi}{ U_{\mathfrak{r}}}}
     \\
    W_{\mathfrak{r}}
    \ar[r]_{ \nu_{\mathfrak{r}}}
    & 
    U_{\mathfrak{r}}
    }
    \]
    \end{enumerate} 
\end{enumerate}
\end{defn}
Consider $\tau_{\mathfrak{r}} : \g \to \Omega(W_{\mathfrak{r}})$ as an equivariant polynomial map.  
If we forget the $G$-actions and replace $\tau_{\mathfrak{r}}$ by its degree zero part $\tau_{\mathfrak{r}}^{(0)}$, 
then a $G$-equivariant CF-perturbation representative of $\mathcal{U}$ of $U_{\mathfrak{r}}$ also induces an ordinary CF-perturbation representative of $\mathcal{U}$. 

\begin{defn}
Let  $\mathcal{U}=(U, \E , \psi, s)$ be a $G$-equivariant Kuranishi chart and let 
   \[ \mathcal{S}_{\rindex} = \{ \mathcal{S}_{\rindex}^{\epsilon} = (W_{\rindex} \xrightarrow{\nu_{\rindex}} U_{\rindex}, \tau_{\rindex} , \s_{\rindex}^{\epsilon})\mid \epsilon \in (0,1]\} \] 
   be a CF-perturbation on a $G$-invariant open subset $U_{\rindex} \subset U$. 
\begin{enumerate}[i)]
    \item \label{local CF-perturbation transverse to zero}
   $\mathcal{S}_{\rindex}$ is said to be \textbf{transverse to zero} if, 
    $\forall 0<\epsilon\leq 1$, the map $\restr{\s_{\rindex}^{\epsilon}}{W_{\rindex}^{\epsilon}}$ is transverse to the zero section on some $G$-invariant neighborhood
    $ W_{\rindex}^{\epsilon}\subset W_{\rindex}$ of the support of $\tau_{\rindex}$. 
     \item \label{strongly submersive with respect to a local CF-perturbation} 
    Let $L$ be a smooth manifold. A $G$-equivariant smooth map $f_{\rindex}: U_{\rindex}\to L$ is said to be \textbf{strongly submersive} with respect to $\mathcal{S}_{\rindex}$ if $\mathcal{S}_{\rindex}$ is transverse to zero and, 
    $\forall 0<\epsilon\leq 1$, 
    , the map 
    \[ \restr{f_{\rindex}\circ \nu_{\rindex}}{(\s_{\mathfrak{r}}^{\epsilon})^{-1}(0)}: (\s_{\mathfrak{r}}^{\epsilon})^{-1}(0) \to L \] 
    is a submersion on some $G$-invariant neighborhood
    $ W_{\rindex}^{\epsilon}\subset W_{\rindex}$ of the support of $\tau_{\rindex}$. 
\end{enumerate}
\end{defn}
\begin{defn}[Ordinary pullback map on a Kuranishi chart] 
Let $\mathcal{U}=(U, \E , \psi, s)$ be a Kuranishi chart and let $f: U \to L$ be a strongly smooth map to a smooth manifold. Then the \textbf{pullback} of $f$ on the Kuranishi chart is the usual pullback of differential forms. 
\end{defn}

\begin{defn}[Ordinary integration along the fiber via a CF-perturbation representative on a suborbifold]
Let $\mathcal{U}=(U, \E , \psi, s)$ be a Kuranishi chart and $U_{\mathfrak{r}}\subset U$ be a suborbifold of $U$. 
Let 
\[ \mathcal{S}_{\mathfrak{r}} = \{ \mathcal{S}_{\mathfrak{r}}^{\epsilon}= (W_{\mathfrak{r}}\xrightarrow{\nu_{\mathfrak{r}}}U_{\mathfrak{r}},\tau_{\mathfrak{r}}, \s^{\epsilon}_{\mathfrak{r}})\mid \epsilon \in (0,1]\}  \]
be CF-perturbation representative of $\mathcal{U}$ on $U_{\mathfrak{r}}$ as in Definition \ref{defn CF-perturbation on a suborbifold}, except that we forget the $G$-actions. 
Let $\mathcal{S}_{\mathfrak{r}}$ be transverse to zero. 
Let $f: U\to L$ be a strongly smooth map such that $f\mid_{U_{\mathfrak{r}}}: U_{\mathfrak{r}}\to L$ is strongly submersive with respect to $\mathcal{S}_{\mathfrak{r}}$. 
The \textbf{integration along the fiber of $f$ via $\mathcal{S}_{\mathfrak{r}}$} is defined as follows. 
Suppose $h \in \Omega_{c}^l(U_{\mathfrak{r}})$ is a compactly supported differential form and $0<\epsilon\leq 1$. 
We have the following data. 
\[
\xymatrix{
& \E_{\mathfrak{r}} \ar[d] & & \\
(\mathfrak{s}_{\mathfrak{r}}^{\epsilon})^{-1}(0)\ar[ur]^{\mathfrak{s}^{\epsilon} }\subset W_{\mathfrak{r}}^{\epsilon}\ar[r]^{\kern 20pt{\nu_{\epsilon}}}   & U_{\mathfrak{r}} \ar[r]^{f\eval_{U_{\mathfrak{r}}}} & L 
}
\]
  Define $f_{!} \left(h; \mathcal{S}^{\epsilon}\right)\in \Omega_{c}^{l - (\dim U_{\mathfrak{r}} - \rank \E_{\mathfrak{r}} - \dim L)}(L)$ to be the unique form satisfying 
  \[ 
 \int_L (f_{U_{\mathfrak{r}}})_!(h; \mathcal{S}_{\mathfrak{r}}^{\epsilon})\wedge \rho 
 =  \int_{(\s_{\mathfrak{r}}^{\epsilon})^{-1}(0)} \nu_{\epsilon}^*
 h \wedge (f_{U_{\mathfrak{r}}}\circ \nu_{\epsilon})^* \rho \wedge \tau_{\mathfrak{r}} 
    \] 
for all $\rho \in \Omega(L)$. 
\end{defn}

\subsection{Existence of basic Thom forms for locally free actions}
\label{section Existence of basic Thom forms for locally free actions}
Let $M$ be a smooth manifold on which a compact connected Lie gorup $G$ acts smoothly.
Let $\g$ be the Lie algebra of $G$.  

\begin{defn}[Locally free and free group actions]
    A smooth $G$-action on a smooth manifold $M$ by a compact Lie group $G$ is \textbf{locally free} if the isotropy group $G_p $ is finite for all  $p\in M$, and it is \textbf{free} if $G_p $ is trivial for all $p\in M$. 
\end{defn} 

Apparently, free group actions are locally free. Moreover, we have the following. 
 
\begin{lem}
\label{Lemma locally free iff g-action is free}
Let a compact connected Lie group $G$ act smoothly on a smooth manifold $M$. Then 
\begin{equation}
\label{equation Lie algebra of isotropy group}
  \operatorname{Lie}(G_p) =\g_p \qquad \forall p\in M.   
\end{equation}
\end{lem}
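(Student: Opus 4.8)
The claim to prove is Lemma~\ref{Lemma locally free iff g-action is free}, namely that $\operatorname{Lie}(G_p) = \g_p$ for every $p \in M$, where one must first fix the notation: $G_p = \{g \in G \mid g\cdot p = p\}$ is the isotropy subgroup and $\g_p := \{\zeta \in \g \mid \underline{\zeta}(p) = 0\}$ is the set of Lie algebra elements whose fundamental vector field (Definition~\ref{defn Fundamental vector fields}) vanishes at $p$; this is the infinitesimal isotropy. The plan is to prove the two inclusions $\operatorname{Lie}(G_p) \subseteq \g_p$ and $\g_p \subseteq \operatorname{Lie}(G_p)$ separately, the first being a one-line consequence of differentiating the orbit map and the second requiring the existence of the exponential map together with a connectedness/one-parameter-subgroup argument.

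For the inclusion $\operatorname{Lie}(G_p) \subseteq \g_p$: let $\zeta \in \operatorname{Lie}(G_p)$. Since $G_p$ is a closed subgroup of the compact Lie group $G$, it is an embedded Lie subgroup, and $\exp(t\zeta) \in G_p$ for all $t \in \R$, so $\exp(t\zeta)\cdot p = p$ for all $t$. Differentiating the curve $t \mapsto \exp(-t\zeta)\cdot p$ at $t=0$ (which is the defining formula \eqref{equation fundamentall vector field Lie algebra homomorphism} for $\underline{\zeta}(p)$) gives $\underline{\zeta}(p) = 0$, i.e. $\zeta \in \g_p$.

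For the reverse inclusion $\g_p \subseteq \operatorname{Lie}(G_p)$: suppose $\zeta \in \g_p$, so $\underline{\zeta}(p) = 0$. Consider the smooth curve $\gamma(t) = \exp(-t\zeta)\cdot p$ in $M$. I would show $\gamma$ is constant: it satisfies the ODE $\gamma'(t) = \underline{\zeta}(\gamma(t))$ — this follows because the flow of the fundamental vector field $\underline{\zeta}$ is exactly the action of the one-parameter subgroup $\exp(-t\zeta)$ (a standard fact; $\sigma$ being a Lie algebra anti-homomorphism/homomorphism as recorded in Definition~\ref{defn Fundamental vector fields}) — and $\gamma(0) = p$ with $\underline{\zeta}(p) = 0$. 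Since $t \mapsto p$ is also a solution of this ODE with the same initial condition, uniqueness of integral curves forces $\gamma(t) = p$ for all $t$. Hence $\exp(t\zeta) \in G_p$ for all $t$, so the one-parameter subgroup generated by $\zeta$ lies in $G_p$, and therefore $\zeta \in \operatorname{Lie}(G_p)$.

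The main obstacle — really the only non-formal point — is justifying that the flow of $\underline{\zeta}$ coincides with the action of $\exp(-t\zeta)$, which is what lets uniqueness of integral curves do the work; this is where one needs the completeness of $\underline{\zeta}$ (automatic here since $G$ is compact, so the action is complete) and the definition of $\sigma$ as a Lie algebra homomorphism. Everything else is bookkeeping. As a corollary one gets that the $G$-action is locally free if and only if $\g_p = 0$ for all $p$ (since $G_p$ finite $\iff$ $\dim G_p = 0$ $\iff$ $\operatorname{Lie}(G_p) = 0$), which is presumably the form in which the lemma is used to verify that freeness of the $G$-action on Kuranishi charts gives the vanishing of fundamental vector fields needed for the horizontality statements in Section~\ref{section Existence of basic Thom forms for locally free actions}.
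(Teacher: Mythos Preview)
Your proof is correct and complete. The paper does not actually supply a proof of this lemma: it states the identity \eqref{equation Lie algebra of isotropy group} and immediately draws the corollary that local freeness is equivalent to $\g_p=\{0\}$ for all $p$, treating the result as a standard fact from Lie theory. Your two-inclusion argument via the exponential map and uniqueness of integral curves is exactly the standard justification one would give, and the remark at the end about how the corollary is used in Section~\ref{section Existence of basic Thom forms for locally free actions} matches the paper's intent.
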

Thus, the group action is locally free if and only if 
    \[ \g_p := \{\zeta\in \g\mid \underline{\zeta}(p)=0\} = \{0\} \qquad \forall p\in M.  \]

The definition of a connection on a principal $G$-bundle can be generalized as follows. 
\begin{defn}[$G$-connection]
\label{defn G-connection}
A \textbf{$G$-connection} on the de Rham complex $\Omega(M)$ of a $G$-manifold $M$ is a $G$-equivariant linear map $A : \g^* \to \Omega^1(M)$ such that, for all $\zeta\in \g$ and all $\xi\in \g^*$, 
\begin{equation}
\label{equation condition C}
 \iota_{\underline{\zeta}}(A(\xi)) = \pair{\xi,\zeta},    
\end{equation}
where $\pair{\cdot, \cdot}$ denotes the pairing between $\g^*$ and $\g$. 
\end{defn}

The ``$\Rightarrow$" direction of the following proposition is essential in the proof of Lemma \ref{key lemma}. It is also fundamental in the formulation of equivariant de Rham theory in terms of the Weil model and in the generalization of the Chern-Weil theory. 
Due to the following equivalence, a $\g$-differential graded algebra, which the de Rham complex of a $G$-manifold is an example of, with a $G$-connection is often called a ``locally free $\g$-dga". 
\begin{prop}
\label{prop locally free action is equivalent to existence of G-connection}
A smooth action on a smooth manifold $M$ by a compact connected Lie group $G$ is locally free if and only if 
 $\Omega(M)$ has a $G$-connection.
\end{prop}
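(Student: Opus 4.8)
The plan is to prove the equivalence in Proposition \ref{prop locally free action is equivalent to existence of G-connection} by establishing both directions, with the ``$\Leftarrow$'' direction being a short pointwise argument and the ``$\Rightarrow$'' direction being the substantive construction that uses compactness of $G$ to average.

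For the ``$\Leftarrow$'' direction, suppose $A:\g^*\to\Omega^1(M)$ is a $G$-connection and suppose, for contradiction, that the action is not locally free. By Lemma \ref{Lemma locally free iff g-action is free}, there exist $p\in M$ and $\zeta\in\g$ with $\zeta\neq 0$ but $\underline{\zeta}(p)=0$. Pick $\xi\in\g^*$ with $\pair{\xi,\zeta}=1$. Evaluating the defining condition \eqref{equation condition C} at the point $p$, $\iota_{\underline{\zeta}(p)}(A(\xi)_p)=\pair{\xi,\zeta}=1$; but the left-hand side is $A(\xi)_p(\underline{\zeta}(p))=A(\xi)_p(0)=0$, a contradiction. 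Hence the action is locally free.

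For the ``$\Rightarrow$'' direction, suppose the action is locally free, so by Lemma \ref{Lemma locally free iff g-action is free} the fundamental vector fields $\underline{\zeta}$, $\zeta\in\g$, are pointwise linearly independent: the map $\g\to T_pM$, $\zeta\mapsto\underline{\zeta}(p)$, is injective for every $p$. First I would produce, locally, a one-form $\theta$ whose contraction with $\underline{\zeta}$ recovers $\zeta$. Concretely, fix a basis $\zeta_1,\dots,\zeta_n$ of $\g$ with dual basis $\xi^1,\dots,\xi^n$ of $\g^*$. On a neighborhood $V$ of any point, injectivity lets one complete $\underline{\zeta_1},\dots,\underline{\zeta_n}$ to a local frame and take $\theta^1,\dots,\theta^n\in\Omega^1(V)$ dual to the $\underline{\zeta_i}$ (so $\theta^i(\underline{\zeta_j})=\delta^i_j$). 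Patching these with a partition of unity subordinate to an open cover gives global one-forms $\theta^1,\dots,\theta^n\in\Omega^1(M)$ still satisfying $\iota_{\underline{\zeta_j}}\theta^i=\delta^i_j$, equivalently $\sum_i \theta^i\otimes\zeta_i$ contracts correctly; define a preliminary (non-equivariant) $\widetilde A(\xi^i)=\theta^i$ extended linearly, so $\iota_{\underline\zeta}\widetilde A(\xi)=\pair{\xi,\zeta}$ for all $\xi,\zeta$. Then I would average over $G$ using the Haar measure: set
\[
A(\xi) \;=\; \int_G g^*\bigl(\widetilde A(\Ad_{g}^*\xi)\bigr)\,dg,
\]
choosing the $\Ad^*$-twist precisely so that $A$ becomes $G$-equivariant, i.e. $g^*A(\xi)=A(\Ad_g^*\xi)$; this is a routine change-of-variables check in the integral. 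Finally one verifies that averaging preserves condition \eqref{equation condition C}: since $G$-equivariance of the vector-field assignment gives $g_*\underline{\zeta}=\underline{\Ad_g\zeta}$, the contraction $\iota_{\underline\zeta}$ interacts with $g^*$ in the way that makes the integrand's contraction equal the constant $\pair{\xi,\zeta}$ for every $g$, so the integral over $G$ (total mass $1$) is again $\pair{\xi,\zeta}$.

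The main obstacle I anticipate is bookkeeping the $\Ad$/$\Ad^*$ twists correctly in the averaging step so that the resulting $A$ is genuinely $G$-equivariant as a map $\g^*\to\Omega^1(M)$ while still satisfying the normalization \eqref{equation condition C}; getting the variance conventions consistent (and checking that $g\mapsto \underline\zeta$ transforms by $\Ad$, hence its dual pairing is $\Ad^*$-equivariant) is where sign/placement errors creep in. A secondary point worth stating carefully is the orbifold case: since the Kuranishi charts are effective orbifolds with smooth $G$-action, the same construction goes through on local uniformizing charts equivariantly and descends, because partitions of unity and Haar averaging are available there too; but for the proposition as stated $M$ is an honest manifold, so this is only a remark for the application to Lemma \ref{key lemma}.
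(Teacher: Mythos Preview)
Your proposal is correct, and your ``$\Leftarrow$'' argument essentially matches the paper's (which simply asserts that the existence of $A$ forces $\g_p=\{0\}$; your contrapositive with a specific $\xi$ just makes this explicit). The ``$\Rightarrow$'' direction, however, takes a genuinely different route. The paper constructs the connection via a $G$-invariant Riemannian metric $g$ for which the fundamental fields $\underline{\zeta_1},\dots,\underline{\zeta_r}$ are pointwise orthonormal, sets $A(\theta_i)=g(\underline{\zeta_i},-)$, and then verifies equivariance \eqref{equation G-equivariance of the connection} by a structure-constant computation using $[\mathcal{L}_{\underline{\zeta_k}},\iota_{\underline{\zeta_j}}]=\iota_{[\underline{\zeta_k},\underline{\zeta_j}]}$. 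Your approach instead builds a non-equivariant $\widetilde A$ by patching local dual coframes with a partition of unity, then Haar-averages with an $\Ad^*$-twist. Both methods exploit compactness of $G$ in the same essential way (to produce either an invariant metric or an averaging measure), but yours avoids the somewhat delicate step of arranging the invariant metric to render the $\underline{\zeta_i}$ orthonormal, while the paper's version has the advantage of immediately exhibiting the horizontal distribution $H=V^\perp$ used downstream (e.g.\ in the Cartan operator of Theorem \ref{thm Cartan operator is homotopic to identity}). Your caution about the $\Ad/\Ad^*$ bookkeeping is well placed: once you fix the convention $\pair{\Ad_g^*\xi,\zeta}=\pair{\xi,\Ad_{g^{-1}}\zeta}$ and use $(\varphi_g)_*\underline{\zeta}=\underline{\Ad_g\zeta}$, the contraction identity $\iota_{\underline\zeta}\circ g^*=g^*\circ\iota_{\underline{\Ad_g\zeta}}$ makes the integrand constant equal to $\pair{\xi,\zeta}$, and the change of variables $g'=gh$ gives the equivariance you want.
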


\begin{proof}
We reproduce the proof in \cite{GS} \S 2.3.4. 
If a $G$-action is locally free, by Lemma \ref{Lemma locally free iff g-action is free}, for each $p\in M$, 
there is an injective homomorphism 
\begin{equation}\label{equation g is a subspace of TpM}
\g \to T_pM, \qquad \zeta\mapsto \underline{\zeta}(p).    
\end{equation} 
Fix a basis $\zeta_1,\ldots, \zeta_r$ for $\g$, a dual basis $\theta_1,\ldots, \theta_r$ for $\g^*$, and a $G$-invariant metric $g$ on $M$ such that, for all $p\in M$, the vectors $\underline{\zeta_1}(p), \ldots, \underline{\zeta_r}(p)$ are orthonormal. 
Then we can define $A: \g^*\to \Omega^1(M)$ by 
\begin{equation}\label{exists G-connection}
A(\theta_i) = g(\underline{\zeta_i}, -) \quad \forall 1\leq i \leq r.    
\end{equation} 
Then the one-forms $\Theta_i =A(\theta_i)$, $1\leq i \leq r$, span the vertical subbundle $V$ of $T^*M$. 
Let $H = V^{\perp}$ be the horizontal subbundle.  

Since
\[ \iota_{\underline{\zeta_j}}(A(\theta_i)) = g(\underline{\zeta_i},\underline{\zeta_j}) = \delta_{ij} = \pair{\theta_i,\zeta_j} \quad \forall i,j, \]
the condition \eqref{equation condition C} is satisfied. 
We now show the $G$-equivariance 
\begin{equation}
    \label{equation G-equivariance of the connection}
    \mathcal{L}_{\zeta}A(\xi) = A(\operatorname{ad}_{\zeta}^*(\xi)).  
\end{equation} 
Let $k,i\in \{1\ldots, r\}$. 
For any $j\in \{1\ldots, r\}$,   $\iota_{\underline{{\zeta}_j}}(A(\theta_i))$ is constant. 
Thus, 
\begin{align}
\label{equation connection is equivariant}
    0 
    & = \mathcal{L}_{\underline{\zeta_k}}\iota_{\underline{\zeta_j}}(A(\theta_i)) 
   \nonumber  \\ 
    & = 
    \left [ \mathcal{L}_{\underline{\zeta_k}}, 
    \iota_{\underline{\zeta_j}}
    \right ](A(\theta_i)) 
    + 
    \iota_{\underline{\zeta_j}} 
    \mathcal{L}_{\underline{\zeta_k} }(A(\theta_i))  \nonumber
    \\
    & = \iota_{[\underline{\zeta_k}, \underline{\zeta_j}]}(A(\theta_i)) + 
    \iota_{\underline{\zeta_j}} 
    \mathcal{L}_{\underline{\zeta_k} }(A(\theta_i)). 
\end{align}
Here $ \left [ \mathcal{L}_{\underline{\zeta_k}}, 
    \iota_{\underline{\zeta_j}}
    \right ]$ is the commutator $\mathcal{L}_{\underline{\zeta_k}} 
    \iota_{\underline{\zeta_j}} -  
    \iota_{\underline{\zeta_j}}\mathcal{L}_{\underline{\zeta_k}}$. 
Let the $c_{kj}^i$ be the structure constants defined by 
\[ [\underline{\zeta_k},\underline{\zeta_j}] = \sum_{i=1}^r c_{kj}^i \underline{\zeta_i}. \]
By \eqref{equation connection is equivariant}, we have
\[ \mathcal{L}_{\underline{\zeta_k} }(A(\theta_i)) = -\sum_{j=1}^r c_{kj}^i \Theta_j + \alpha_{ki}, \]
for some horizontal $\alpha_{ki}$. 
Since the metric is invariant, and both $\mathcal{L}_{\underline{\zeta_k} }(A(\theta_i))$ and $-\sum_{j=1}^r c_{kj}^i \Theta_j$ are vertical, we have $\alpha_{ki}=0$. 
On the other hand, for any $j$, we have
\[ \pair{\operatorname{ad}_{\zeta_k}^*(\theta_i), \zeta_j} = \pair{\theta_i,- [\zeta_k,\zeta_j]} = -c_{kj}^i. \]
This implies that $\operatorname{ad}_{\zeta_k}^*(\theta_i) = -\sum_{j=1}^r c_{kj}^i\zeta_j$. 
Thus, 
\[ 
A( \operatorname{ad}_{\zeta_k}^*(\theta_i) ) = - \sum_{j=1}^r c_{kj}^i g(\underline{\zeta_j}, -) = - \sum_{j=1}^r c_{kj}^i \Theta_j = \mathcal{L}_{\underline{\zeta_k}}(A(\theta_i)). 
\]
Therefore, $A$ is a $G$-connection on $\Omega(M)$. 

Conversely, if there exists a $G$-connection $A$ on $\Omega(M)$, then  
$\g_p=\{0\}$ for all $p\in M$, and thus the $G$-action is locally free by \eqref{equation Lie algebra of isotropy group}.     
\end{proof}

Consider the Cartan operator defined in the following theorem, whose proof is omitted. 
\begin{thm}[Cartan operator is homotopic to identity, \cite{GS} \S 5]  
\label{thm Cartan operator is homotopic to identity}
Consider a locally free action of a compact connected Lie group $G$ of dimension $r$ on a smooth manifold $M$. Let $\g$ be the Lie algebra of $G$ and $\g^*$ be its dual. Let $S(\g^*)$ be the symmetric algebra on $\g^*$. Then we can equip $\Omega(M)$ with a $G$-connection $A$. 
Let $F^A$ be the associated curvature, and let $A_i=A(\theta_i)$ for all $1\leq i \leq r$
as in Proposition \ref{prop locally free action is equivalent to existence of G-connection}.  
There exists an operator $\operatorname{Car}^A$, defined by the composition 
\begin{equation}
    \label{equation Cartan operator}
     \operatorname{Car}^A: \left( \Omega(M)\otimes S(\g^*) \right)^G \xrightarrow{\operatorname{Hor}^{A}} \left( \Omega_{hor}^A(M)\otimes S(\g^*) \right)^G \xrightarrow{1\otimes \kappa_{S(\g^*)}} \Omega_{bas}(M),
\end{equation} 
where $\operatorname{Hor}^{A}$ 
is the projection to 
$\left( \Omega_{hor}^A(M)\otimes S(\g^*) \right)^G $, and $ \kappa_{S(\g^*)}: S(\g^*)\to \Omega(M)$ is defined by 
\begin{equation}
\label{equation Chern-Weil on polynomials}
\kappa_{S(\g^*)}(\gamma_1 \cdots \gamma_k) = (\gamma_1\circ F^{A}) \wedge \cdots \wedge(\gamma_k\circ F^{A}) \quad \forall \gamma_1, \ldots, \gamma_k\in \g^*,      
\end{equation} 
where $\gamma_i\circ F^A= \left(\Omega^2(M)^*\xrightarrow{(F^{A})^*} \g \xrightarrow{\gamma_i}\R\right)$. 
The map \[ \operatorname{Car}^A: (\Omega_G(M),d_G)\to (\Omega_{bas}(M), d) \]is a chain map which is chain homotopic to the identity. 
\end{thm}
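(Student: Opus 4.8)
The plan is to prove both assertions by factoring $\operatorname{Car}^A$ through the Weil algebra $W(\g) = \Lambda^{\bullet}\g^* \otimes S(\g^*)$, exploiting its acyclicity. Write $j: \Omega_{bas}(M) \hookrightarrow \Omega_G(M)$ for the inclusion of a basic form as an equivariant form with trivial $S(\g^*)$-part. Since such a form is already horizontal and $\kappa_{S(\g^*)}$ fixes the scalar part of $S(\g^*)$, one has $\operatorname{Car}^A \circ j = \operatorname{id}$ on $\Omega_{bas}(M)$ immediately; hence the real content of ``chain homotopic to the identity'' is the statement $j \circ \operatorname{Car}^A \simeq \operatorname{id}_{\Omega_G(M)}$, which exhibits $\operatorname{Car}^A$ as a homotopy inverse of $j$ and so a chain homotopy equivalence $\Omega_G(M) \simeq \Omega_{bas}(M)$.

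First I would verify the chain-map identity $d \circ \operatorname{Car}^A = \operatorname{Car}^A \circ d_G$ by a direct computation on generators. Using Proposition \ref{prop locally free action is equivalent to existence of G-connection}, fix the $G$-equivariant connection $A$, with curvature $F^A$ satisfying the structure equation $F^A_i = dA_i + \tfrac12\sum_{j,k} c_{jk}^i\, A_j\wedge A_k$ (up to sign conventions) and the Bianchi identity. It suffices to check the identity on the algebra generators of $\Omega_G(M)$: invariant forms $\alpha\in\Omega(M)^G$ with trivial polynomial part, and the degree-two generators $\theta_i\in S(\g^*)$. The two inputs are that $\kappa_{S(\g^*)}$ carries $G$-invariant polynomials to $d$-closed basic forms (Chern--Weil, via Bianchi), and that $\operatorname{Hor}^A$ commutes with $d$ up to a curvature correction that is exactly matched by the $-\iota_{\underline{\zeta}}$ term of $d_G$ once $\theta_i$ is replaced by $F^A_i$. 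This part is routine but sign- and structure-constant-heavy.

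For the homotopy I would pass to the Weil model. The connection $A$ determines a characteristic (Chern--Weil) $\g$-dga homomorphism $w_A: W(\g)\to \Omega(M)$, sending the degree-one generators to the $A_i$ and the curvature generators to the $F^A_i$; this yields an evaluation $e_A: (W(\g)\otimes \Omega(M))_{bas}\to \Omega_{bas}(M)$ on basic subcomplexes. A Mathai--Quillen/Kalkman-type automorphism $\Phi$ of $W(\g)\otimes\Omega(M)$, built by exponentiating the contraction of the degree-one Weil generators against the fundamental vector fields $\underline{\zeta_i}$, intertwines the Weil differential with the Cartan differential and restricts to an isomorphism $(W(\g)\otimes\Omega(M))_{bas}\xrightarrow{\cong}\Omega_G(M)$; checking on generators identifies $\operatorname{Car}^A$ with $e_A\circ\Phi^{-1}$. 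The standard contracting homotopy $h_W$ of the acyclic Weil algebra (satisfying $d_W h_W + h_W d_W = \operatorname{id} - P_0$, with $P_0$ the projection to the constants) then induces, after tensoring with $\Omega(M)$ and passing to basic subcomplexes, a chain homotopy between the identity and the evaluation; conjugating it by $\Phi$ transports this to the desired operator $K$ on $\Omega_G(M)$ with $j\circ\operatorname{Car}^A - \operatorname{id} = d_G K + K d_G$.

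\textbf{Main obstacle.} The crux is the bookkeeping that identifies $\operatorname{Car}^A$ with $e_A\circ\Phi^{-1}$ and verifies that $\Phi$ intertwines $d_W$ and $d_G$: this is where the structure constants and the sign conventions in the Cartan differential must all line up. A closely related difficulty, and the reason local freeness is essential, is that every operator involved --- $\operatorname{Hor}^A$, the transported $h_W\otimes 1$, and $K$ --- must preserve both $G$-invariance and horizontality so that the homotopy genuinely lives on the basic subcomplexes; the $G$-equivariance of the connection recorded in \eqref{equation G-equivariance of the connection}, together with the injectivity \eqref{equation g is a subspace of TpM} guaranteed by Lemma \ref{Lemma locally free iff g-action is free}, is precisely what makes $\operatorname{Hor}^A$ and the contracting homotopy descend correctly. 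Once these compatibilities are established, both assertions of the theorem follow.
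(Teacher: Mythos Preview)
The paper does not actually prove this theorem: it explicitly states ``whose proof is omitted'' and cites \cite{GS} \S 5 for the argument. So there is no paper-proof to compare against. Your outline---passing to the Weil model via the Mathai--Quillen/Kalkman automorphism $\Phi$, identifying $\operatorname{Car}^A$ with the evaluation $e_A\circ\Phi^{-1}$, and transporting the contracting homotopy of the acyclic Weil algebra---is precisely the standard route taken in \cite{GS} \S 5, and is correct in substance. The bookkeeping you flag (that $\Phi$ intertwines $d_W$ and $d_G$, and that the homotopy descends to basic subcomplexes) is indeed where the work lies, but it is well documented in that reference.
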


The construction in Theorem \ref{thm Cartan operator is homotopic to identity} is used to show 
Theorem \ref{thm Existence of $G$-basic Thom forms} in \cite{Cieliebak}. We refer the reader to \cite{Cieliebak} for the proof of it. 

\begin{thm}[Existence of basic Thom forms \cite{Cieliebak} Theorem 3.8 and Remark 5.2]
\label{thm Existence of $G$-basic Thom forms}
Let $\tau \in \Omega_G^d(E)$ be an equivariant Thom form on the $G$-equivariant oriented real vector bundle $E\to B$  of rank $d$. 
Suppose the $G$-actions on $E$ and $B$ are locally free. 
Then there exists a $G$-connection $A:\g^*\to \Omega^1(E)$ such the Cartan operator $\operatorname{Car}^A$ as in \eqref{equation Cartan operator} carries $\tau$ to a $G$-basic Thom form $\tau_A$, which also satisfies Definition \ref{definition Equivariant Thom forms}.  
\end{thm}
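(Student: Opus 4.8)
The plan is to set $\tau_A:=\operatorname{Car}^A(\tau)$ for a suitably chosen $G$-connection $A$ and then verify, one at a time, the three conditions of Definition \ref{definition Equivariant Thom forms}. First I would produce the connection: since the $G$-action on $E$ is locally free, Proposition \ref{prop locally free action is equivalent to existence of G-connection} furnishes a $G$-connection $A:\g^*\to\Omega^1(E)$, and the local freeness of the actions on $E$ and $B$ is exactly what makes the Cartan operator $\operatorname{Car}^A$ of Theorem \ref{thm Cartan operator is homotopic to identity} well-defined and chain homotopic to the identity. Any such $A$ will do; no special choice is needed. With $\tau_A=\operatorname{Car}^A(\tau)$, the target of $\operatorname{Car}^A$ is $\Omega_{bas}(E)$, so $\tau_A$ is automatically $G$-basic, and since $\operatorname{Car}^A$ preserves total degree — the substitution $\kappa_{S(\g^*)}$ sends $S^k(\g^*)$, of Cartan degree $2k$, to $2k$-forms via $F^A$ — we obtain $\tau_A\in\Omega_{bas}^d(E)$.

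Next I would check equivariant closedness. Because $\operatorname{Car}^A$ is a chain map $(\Omega_G(E),d_G)\to(\Omega_{bas}(E),d)$ and $d_G\tau=0$, we get $d\tau_A=\operatorname{Car}^A(d_G\tau)=0$. For a basic form one has $d_G\tau_A(\zeta)=d\tau_A-\iota_{\underline{\zeta}}\tau_A=d\tau_A=0$ for all $\zeta\in\g$, so $\tau_A$ is equivariantly closed, giving condition (1). For the support condition (3), the key observation is that $\operatorname{Car}^A=(1\otimes\kappa_{S(\g^*)})\circ\operatorname{Hor}^A$ is assembled from pointwise algebraic operations: the horizontal projection $\operatorname{Hor}^A$ is a composition of operators of the form $1-A(\theta_i)\wedge\iota_{\underline{\zeta_i}}$, and $\kappa_{S(\g^*)}$ wedges with powers of the curvature $F^A$. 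None of these enlarges the support of a form, so $\operatorname{supp}\tau_A\subseteq\operatorname{supp}\tau$. Since $\tau$ satisfies Definition \ref{definition Equivariant Thom forms} with some $G$-invariant neighborhood $\mathcal{O}$ of the zero section having fiberwise-convex, fiberwise-precompact slices, and $\operatorname{supp}\tau$ is $G$-invariant, the same $\mathcal{O}$ works for $\tau_A$.

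The main obstacle is the normalization, condition (2): $\int_{E_x}\tau_A=1$ for all $x\in B$. Here I would exploit the chain homotopy of Theorem \ref{thm Cartan operator is homotopic to identity}: writing $\operatorname{Car}^A-\id=d_G\,h+h\,d_G$ and using $d_G\tau=0$ yields $\tau_A=\tau+d_G(h\tau)$ with $h\tau\in\Omega_G^{d-1}(E)$. Integrating over the fibers and using the equivariant projection formula (fiber integration commutes with $d_G$ up to sign), the correction becomes $d_G$ applied to $\int_{E/B}h\tau$; but every form-degree occurring in $h\tau$ is at most $d-1<d$, so integration over the $d$-dimensional fibers annihilates it and $\int_{E/B}h\tau=0$. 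Hence $\int_{E_x}\tau_A=\int_{E_x}\tau=1$.

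The delicate points I expect to spend the most care on are: (i) that the homotopy operator $h$ from \cite{GS} preserves compact vertical support, so that the fiber integrals and the equivariant Stokes step above are legitimate — this is where I would have to track $h$'s explicit construction and its interaction with $\operatorname{supp}\tau\subseteq\mathcal{O}$; and (ii) confirming the Cartan-degree bookkeeping, so that $\tau_A$ genuinely lands in form-degree $d$ and the degree-count vanishing of $\int_{E/B}h\tau$ applies. With these in hand, all three conditions of Definition \ref{definition Equivariant Thom forms} hold and $\tau_A=\operatorname{Car}^A(\tau)$ is the desired $G$-basic Thom form.
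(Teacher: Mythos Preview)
The paper does not actually prove this theorem: immediately after stating it, the author writes ``We refer the reader to \cite{Cieliebak} for the proof of it.'' So there is no in-paper argument to compare against. What the paper does say is that ``the construction in Theorem \ref{thm Cartan operator is homotopic to identity} is used to show Theorem \ref{thm Existence of $G$-basic Thom forms} in \cite{Cieliebak},'' and your proposal is precisely an elaboration of that strategy: apply $\operatorname{Car}^A$ to $\tau$, use that it lands in basic forms and is a chain map for closedness, use that it is built from pointwise operations for the support condition, and use the chain homotopy to the identity together with a degree count under fiber integration for the normalization. This is consistent with the paper's one-line indication and is, as far as can be judged here, the intended argument.

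Your sketch is essentially sound. The two places you flag as delicate are indeed the only real content: that the homotopy $h$ from \cite{GS} does not enlarge vertical support (so the fiber integrals and the Stokes step are legitimate), and that equivariant fiber integration intertwines $d_G$ (which requires $\int_{E/B}$ to commute with both $d$ and $\iota_{\underline{\zeta}}$ --- the latter holds because $\pi$ is $G$-equivariant, so the fundamental vector fields on $E$ project to those on $B$). One small overclaim: you assert ``any such $A$ will do; no special choice is needed,'' which is stronger than the theorem's existential statement, but your argument does support it. Since the paper defers entirely to \cite{Cieliebak}, your write-up would in fact supply more detail than the paper itself.
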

We can generalize the construction to define basic Thom forms on $G$-equivariant oriented orbibundles with free actions.

\subsection{Proof of Lemma \ref{key lemma}}
\label{subsection Proof of key lemma }
\begin{proof}[Proof of Lemma \ref{key lemma}]

Consider a $G$-equivariant Kuranishi structure on $\Mq_{k+1,l}(L,J,\beta)$ which satisfy Assumption \ref{assumptions}. 

Recall that pullback maps defined via strongly smooth maps of the form $\mathcal{M}\to L$, from a Kuranishi space to a smooth manifold $L$, are defined to be chart-wise pullback, and the integration along the fiber maps defined via strongly smooth maps of the form $\mathcal{M}\to L$,  which are strongly submersive with respect to a CF-perturbation, are defined by taking integration along the fiber maps on suborbifolds that cover the Kuranishi charts and gluing by partitions of unity. 
Therefore, it suffices to show that 
the pullback maps of the form $\evaluation_{i,(k+1,l,\beta)}^*$, $(\evaluation_{(k+1,l,\beta)}^j)^*$, $\iota^*$, 
preserve $G$-basicness on the $G$-equivariant Kuranishi charts,
and that the integration along the fiber maps of the form $\evaluation_{0,(k+1,l,\beta)!}$ preserve $G$-basicness on $G$-invariant open subsets of the $G$-equivariant Kuranishi charts. 

Since $\iota$ is a $G$-equivariant map of smooth manifolds, $\iota^*$ commutes with $\mathcal{L}_{\underline{\zeta}}$, $\iota_{\underline{\zeta}}$ for all $\zeta\in \g$, showing that $\iota^*$ preserves $G$-basicness. 
Similarly, if $\Mq_{k+1,l}(L,J,\beta)$ satisfies Assumption \ref{assumptions}, then pulling back by the equivariant maps of the form $\evaluation_{i,(k+1,l,\beta)}$,  $ \evaluation_{(k+1,l,\beta)}^j $ also preserve $G$-basicness.

Let $\mathcal{U}=(U, \E , \psi, s)$ be a $G$-equivariant Kuranishi chart of the moduli space $\Mq_{k+1,l}(L,J,\beta)$. 
Let   
\[ \mathcal{S}_{\rindex} = \{ \mathcal{S}_{\rindex}^{\epsilon} = (W_{\rindex} \xrightarrow{\nu_{\rindex}} U_{\rindex}, \tau_{\rindex} , \s_{\rindex}^{\epsilon})\mid \epsilon \in (0,1]\} \] 
   be a CF-perturbation representative on a nonempty $G$-invariant open subset $U_{\rindex} \subset U$.
Let $f_{U_{\mathfrak{r}}}$ denote the restriction of $\evaluation_{0,(k+1,l,\beta)}$ to $U_{\mathfrak{r}}$. 
By assumption, it is a $G$-equivariant strongly smooth map which is strongly submersive.   
By Asssumption \ref{assumptions} and Theorem \ref{thm Existence of $G$-basic Thom forms}, we may assume that the equivariant Thom form  $\tau_{\mathfrak{r}}$ is  $G$-basic.  

We want to show that $(f_{U_{\mathfrak{r}}})_!$ commutes with $\iota_{\underline{\zeta}}$ and $\mathcal{L}_{\underline{\zeta}}$ for all $\zeta\in \g$.  

Then for all 
$\rho \in  \Omega (L) $ 
and all $\zeta\in \g$ we have
\begin{align*}
   &  \int_L (f_{U_{\mathfrak{r}}})_!(\iota_{\underline{\zeta}}h; \mathcal{S}_{\rindex}^{\epsilon})\wedge \rho \\
     = &  \int_{(\s_{\mathfrak{r}}^{\epsilon})^{-1}(0)} \nu_{\epsilon}^*\iota_{\underline{\zeta}}h \wedge (f_{U_{\mathfrak{r}}}\circ \nu_{\epsilon})^* \rho \wedge \tau_{\mathfrak{r}} 
    \\
    = & \int_{(\s_{\mathfrak{r}}^{\epsilon})^{-1}(0)} \iota_{\underline{\zeta}}\nu_{\epsilon}^* h \wedge (f_{U_{\mathfrak{r}}}\circ \nu_{\epsilon})^* \rho \wedge \tau_{\mathfrak{r}}  
   \\
   = &  (-1)^{\deg h +1}\int_{(\s_{\mathfrak{r}}^{\epsilon})^{-1}(0)} \nu_{\epsilon}^* h \wedge \iota_{\underline{\zeta}}(f_{U_{\mathfrak{r}}}\circ \nu_{\epsilon})^* \rho \wedge \tau_{\mathfrak{r}} 
   \\
   & +  (-1)^{\deg h + \deg \rho +1}\int_{(\s_{\mathfrak{r}}^{\epsilon})^{-1}(0)} \nu_{\epsilon}^* h \wedge (f_{U_{\mathfrak{r}}}\circ \nu_{\epsilon})^* \rho \wedge \iota_{\underline{\zeta}}\tau_{\mathfrak{r}} 
   \\
  = &  (-1)^{\deg h +1}\int_{(\s_{\mathfrak{r}}^{\epsilon})^{-1}(0)} \nu_{\epsilon}^* h \wedge (f_{U_{\mathfrak{r}}}\circ \nu_{\epsilon})^* \iota_{\underline{\zeta}}\rho \wedge \tau_{\mathfrak{r}} \\
   = &  (-1)^{\deg h +1}\int_{L} (f_{U_{\mathfrak{r}}})_!h\wedge\iota_{\underline{\zeta}}  \rho  \\
   = &   \int_{L} \iota_{\underline{\zeta}} (f_{U_{\mathfrak{r}}})_!(h;\mathcal{S}_{\rindex}^{\epsilon})\wedge \rho. 
\end{align*}
Thus, 
\[ (f_{U_{\mathfrak{r}}})_!(\iota_{\underline{\zeta}}h;\mathcal{S}_{\rindex}^{\epsilon}) = \iota_{\underline{\zeta}} (f_{U_{\mathfrak{r}}})_!(h;\mathcal{S}_{\rindex}^{\epsilon}). \] 
Similarly, 
\begin{align*}
   \int_L (f_{U_{\mathfrak{r}}})_!(dh;\mathcal{S}_{\rindex}^{\epsilon})\wedge \rho   &  = \int_{(\s_{\mathfrak{r}}^{\epsilon})^{-1}(0)} \nu_{\epsilon}^*dh \wedge (f_{U_{\mathfrak{r}}}\circ \nu_{\epsilon})^* \rho \wedge \tau_{\mathfrak{r}}
    \\
   &  = \int_{(\s_{\mathfrak{r}}^{\epsilon})^{-1}(0)} d \nu_{\epsilon}^* h \wedge (f_{U_{\mathfrak{r}}}\circ \nu_{\epsilon})^* \rho \wedge \tau_{\mathfrak{r}}
   \\
   &  = (-1)^{\deg h +1}\int_{(\s_{\mathfrak{r}}^{\epsilon})^{-1}(0)} \nu_{\epsilon}^* h \wedge d(f_{U_{\mathfrak{r}}}\circ \nu_{\epsilon})^* \rho \wedge \tau_{\mathfrak{r}}  \\
   & = (-1)^{\deg h +1}\int_{(\s_{\mathfrak{r}}^{\epsilon})^{-1}(0)} \nu_{\epsilon}^* h \wedge (f_{U_{\mathfrak{r}}}\circ \nu_{\epsilon})^* d\rho \wedge \tau_{\mathfrak{r}} \\
   & = (-1)^{\deg h +1}\int_{L} (f_{U_{\mathfrak{r}}})_!(h;\mathcal{S}_{\rindex}^{\epsilon})\wedge d\rho \\
   & =  \int_{L} d(f_{U_{\mathfrak{r}}})_!(h;\mathcal{S}_{\rindex}^{\epsilon})\wedge \rho. 
\end{align*}
Hence,  
\[ (f_{U_{\mathfrak{r}}})_!(dh;\mathcal{S}_{\rindex}^{\epsilon}) = d(f_{U_{\mathfrak{r}}})_!(h;\mathcal{S}_{\rindex}^{\epsilon}). \]
Moreover, we have 
\[ (f_{U_{\mathfrak{r}}})_!(\mathcal{L}_{\underline{\zeta}} h;\mathcal{S}_{\rindex}^{\epsilon})  = (f_{U_{\mathfrak{r}}})_!( \iota_{\underline{\zeta}}dh + d\iota_{\underline{\zeta}} h ;\mathcal{S}_{\rindex}^{\epsilon}) = (\iota_{\underline{\zeta}}d + d\iota_{\underline{\zeta}} )(f_{U_{\mathfrak{r}}})_!(h;\mathcal{S}_{\rindex}^{\epsilon}) = \mathcal{L}_{\underline{\zeta}}(f_{U_{\mathfrak{r}}})_!(h;\mathcal{S}_{\rindex}^{\epsilon}) \]
by Cartan's magic formula. 
Thus, if $h$ is a basic form on $U_{\mathfrak{r}}$, then $(f_{U_{\mathfrak{r}}})_!(h;\mathcal{S}_{\rindex}^{\epsilon}) $ is a basic form on $L$. 
\end{proof}

\section{A conjecture}
\label{section Conjecture on the quantum Kirwan map}
We state a related conjecture. 
\begin{conj}
\label{Conjecture on the quantum Kirwan map} Let $(Y,\omega_Y,G,\mu)$ be a Hamiltonian $G$-manifold such that $G$ acts on $\mu^{-1}(0)$ freely and  $Y\sslash G$ be the corresponding symplectic quotient.
    Then there exists map
    \begin{equation}
       \kappa :  QH_G(Y,\Lambda_{0,nov}) \to QH(Y\sslash G,\Lambda_{0,nov})
    \end{equation}
    such that the equivariant $\Ainf$ algebra $\left(\Omega_G(L,\Lambda_{0,nov}), \{\m_k^G\}_{k\in \N}\right)$ associated to the moment Lagrangian correspondence $L\subset Y^-\times Y\sslash G$ satisfies
    \[ (\m_0^G)^{ \bulk\otimes\kappa(\bulk) , b=0} (1)=0 \qquad \forall \bulk\in QH_G(Y,\Lambda_{0,nov}). 
    \]
\end{conj}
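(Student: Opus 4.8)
Since the statement is a conjecture, what follows is a program rather than a proof. The plan has three parts: construct the $G$-equivariant $A_\infty$ algebra of $L$, define the quantum Kirwan map $\kappa$ via moduli of affine vortices, and prove the Maurer--Cartan identity by organizing the boundary of a vortex moduli space; I would also record the naive extension of the proof of Theorem \ref{main theorem}, since it isolates the obstacle. For the first part I would use the equivariant Kuranishi structures and equivariant CF-perturbations of Assumption \ref{assumptions}, together with equivariant integration along the fiber valued in the Cartan model $\Omega_G(L,\Lambda_{0,nov})$ (as in \cite{FukayaLieGroupoids} and \cite{xiao2023equivariant}), to define $\m_k^G$ and the equivariant operations $\mathfrak{q}_{l,k,\beta}^G$ by the formulas of Section \ref{section Bulk deformation} with $(\evaluation_{0,(k+1,l,\beta)})_!$ replaced by its equivariant version $(\evaluation_0)^G_!$. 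The equivariant $A_\infty$ relations and divisor/unit axioms follow from Assumption \ref{assumptions}(iv) and an equivariant Stokes theorem, and that $(\evaluation_0)^G_!$ commutes with $d_G$ is proved exactly as in Section \ref{section Proof of the key lemma}; by Theorem \ref{thm Cartan operator is homotopic to identity} the resulting algebra refines the one used for Theorem \ref{main theorem}, and one obtains bulk-deformed operators $(\m_k^G)^{\bulk,b}$ for $\bulk\in QH_G(Y,\Lambda_{0,nov})$ of positive valuation and $b\in\Omega_G(L,\Lambda_{0,nov})$.

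For the naive attempt, fixing $\bulk$ and imitating the proof of Theorem \ref{main theorem}, I would write $\kappa(\bulk)=\lim_i\kappa^{(i)}(\bulk)$ with $\kappa^{(i)}(\bulk)=\sum_{i'\le i}\sum_{j'}c_{i',j'}T^{\lambda_{i'}}e^{n_{i',j'}}$, $c_{i',j'}\in\Omega^{even}(Y\sslash G)$, and kill the degree-$\lambda_{i+1}$ obstruction $o_{i+1,j}^G$ to $(\m_0^G)^{\bulk\otimes\kappa^{(i)}(\bulk),\,b=0}(1)=0$ order by order along the monoid $\Gamma$ of \eqref{submonoid}. As in Theorem \ref{main theorem} the $\beta\ne 0$ contributions factor through $(\evaluation_0)^G_!$ and hence, by the equivariant analogue of Lemma \ref{key lemma} and by Lemma \ref{Lemma Basic forms of principal bundles are pullbacks}, are pulled back from $Y\sslash G\cong L/G$, whereas the only new contribution at this order is $\pm\,\iota^*(\bulk\otimes c_{i+1,j})=\pm\,(\bulk|_{\mu^{-1}(0)})\wedge\pi_{L/G}^*c_{i+1,j}$, so the equation to be solved is
\[ o_{i+1,j}^G\;\pm\;(\bulk|_{\mu^{-1}(0)})\wedge\pi_{L/G}^*c_{i+1,j}\;=\;0 . \]
Unlike in Theorem \ref{main theorem}, where both the full bulk on $X$ and a bounding cochain $b$ on $L$ are available, the $Y^-$-component of the bulk is now frozen and $b=0$, so unless $\bulk|_{\mu^{-1}(0)}$ is cohomologous to a unit the map $c\mapsto(\bulk|_{\mu^{-1}(0)})\wedge\pi_{L/G}^*c$ has a proper image and the recursion need not close. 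This is the heart of the matter and the reason the statement is only a conjecture: the terms missing from the displayed equation are exactly the quantum corrections of the \emph{gauged} theory on $Y$, which a count of discs in $Y^-\times Y\sslash G$ cannot see.

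To supply those corrections, I would take $\kappa$ to be Woodward's quantum Kirwan map, built from the moduli space of affine vortices on $\C$ with one interior output on $Y\sslash G$, whose codimension-one boundary interpolates between $J$-holomorphic discs in $Y^-\times Y\sslash G$ with boundary on $L$ (after a vortex neck breaks) and the gauged Gromov--Witten invariants of $Y$. Organizing this boundary should simultaneously give $(\m_0^G)^{\bulk\otimes\kappa(\bulk),\,b=0}(1)=0$ for all $\bulk\in QH_G(Y,\Lambda_{0,nov})$, show that $\kappa$ is independent of the auxiliary choices up to the relevant equivalences, and present $\kappa$ as (the $T$-adic germ of) a ring homomorphism $QH_G(Y,\Lambda_{0,nov})\to QH(Y\sslash G,\Lambda_{0,nov})$. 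I expect the hardest step to be the transversality and Kuranishi theory for these affine-vortex moduli spaces — compatible with the free $G$-action on $\mu^{-1}(0)$ and with all codimension-one degenerations (sphere bubbling in $Y$, disc bubbling on $L$, vortex-neck breaking) — which is precisely the equivariant gluing package that Assumption \ref{assumptions} axiomatizes in the disc-only setting.
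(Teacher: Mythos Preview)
The paper does not prove this statement: it is stated as a conjecture in Section~\ref{section Conjecture on the quantum Kirwan map}, with the only accompanying remark being that the authors ``expect $\kappa$ to be similar to the quantum Kirwan maps constructed in \cite{WoodwardXu}.'' There is therefore no proof in the paper to compare against.

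Your proposal correctly identifies the statement as a conjecture and offers a program rather than a proof. That program is considerably more detailed than anything the paper provides, and its core idea --- taking $\kappa$ to be Woodward's quantum Kirwan map built from affine vortices, and using the boundary strata of a vortex moduli space to relate disc invariants in $Y^-\times Y\sslash G$ to gauged Gromov--Witten invariants of $Y$ --- is exactly in line with the paper's one-line hint. Your diagnosis of why the naive induction of Theorem~\ref{main theorem} fails (the $Y^-$-component of the bulk is frozen and $b=0$, so the obstruction equation need not be solvable within $\Omega(Y\sslash G)$) is a genuine contribution beyond what the paper says, and correctly isolates what the vortex corrections would need to supply. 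In short: your program is reasonable and consistent with the paper's expectations, but since the paper offers no proof, there is nothing further to compare.
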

We expect $\kappa$ to be similar to the quantum Kirwan maps constructed in \cite{WoodwardXu}. (Also see \cite{QuantumKirwanI}, \cite{QuantumKirwanII}, and \cite{QuantumKirwanIII}.) 

\printbibliography

\Addresses 
\end{document}